\documentclass[11pt, reqno, english]{amsart}  
\usepackage[utf8]{inputenc}
\usepackage[T1]{fontenc}
\usepackage{amsmath,amsthm}
\usepackage{amsfonts,amssymb}
\usepackage{url}
\usepackage{mathtools}  
\usepackage[colorlinks=true,urlcolor=blue,linkcolor=red,citecolor=magenta]{hyperref}
\usepackage{enumerate, paralist}
\usepackage{tikz-cd}
\usepackage[left=1in,right=1in,top=1in,bottom=1in]{geometry}

\usepackage{xstring,ifthen,tabularx}
\usepackage{tikz}
\newcommand*\rc[1]{\tikz[baseline=(char.base)]{ \StrLen{#1}[\mystr]\ifthenelse{ \mystr > 1}{\hspace{-.07cm}\node[shape=circle,color=red,text=black,draw,inner sep=0pt](char){#1};}{\hspace{-.06cm}\node[shape=circle,color=red,text=black,draw,inner sep=1pt](char){#1};}}}

\newcounter{commentcounter}

\theoremstyle{plain}
\newtheorem{theorem}{Theorem}[section]
\newtheorem{lemma}[theorem]{Lemma}
\newtheorem{corollary}[theorem]{Corollary}
\newtheorem{proposition}[theorem]{Proposition}

\theoremstyle{definition}

\newtheorem{remark}[theorem]{Remark}

\newcommand{\R}{\mathbb{R}}
\newcommand{\C}{\mathbb{C}}
\newcommand{\Z}{\mathbb{Z}}

\newcommand{\KG}{\mathrm{KG}}
\newcommand{\Emb}{\mathrm{Emb}}
\newcommand{\AEmb}{\mathrm{AEmb}}
\newcommand{\Imm}{\mathrm{Imm}}

\newcommand{\co}{\mathrm{coind}}
\newcommand{\cohomind}{\mathrm{swh}}

\linespread{1.2}

\begin{document}

\title[Spaces of embeddings]{Spaces of embeddings:  Nonsingular bilinear maps, chirality, and their generalizations}



\author{Florian Frick}
\address[FF]{Dept.\ Math.\ Sciences, Carnegie Mellon University, Pittsburgh, PA 15213, USA}
\email{frick@cmu.edu} 

\author{Michael Harrison}
\address[MH]{Dept.\ Math.\ Sciences, Carnegie Mellon University, Pittsburgh, PA 15213, USA}
\email{mah5044@gmail.com} 

\thanks{FF was supported by NSF grant DMS 1855591 and a Sloan Research Fellowship.}


\begin{abstract}
\small
Given a space $X$ we study the topology of the space of embeddings of $X$ into $\R^d$ through the combinatorics of triangulations of~$X$. We give a simple combinatorial formula for upper bounds for the largest dimension of a sphere that antipodally maps into the space of embeddings. This result summarizes and extends results about the nonembeddability of complexes into~$\R^d$, the nonexistence of nonsingular bilinear maps, and the study of embeddings into $\R^d$ up to isotopy, such as the chirality of spatial graphs.
\end{abstract}

\date{October 22, 2020}
\maketitle

\section{Introduction}
\label{sec:intro}

Given a topological space~$X$, a classical question in geometric topology asks for the lowest dimension $d$ such that $X$ embeds into~$\R^d$. Among the milestones of this line of inquiry are the characterization of planar graphs (as graphs without $K_5$- and $K_{3,3}$-minors), Whitney's embedding theorem (that any smooth $d$-manifold embeds into~$\R^{2d}$), the Van Kampen--Flores theorem (that the $d$-skeleton of the $(2d+2)$-simplex does not embed into~$\R^{2d}$), and upper and lower bounds for the embedding dimension of real projective space~$\R P^n$. More generally, one may study the topology of the space $\Emb(X, \R^d)$ of all embeddings $X \hookrightarrow \R^d$ with the compact-open topology.  These spaces are particularly well-studied when $X$ is a discrete space on $n$ points, where $\Emb(X,\R^d)$ is the ordered configuration space of $n$ points in~$\R^d$.  For general $X$, we study the following question: if there is an embedding $X \hookrightarrow \R^d$, is there a rich collection of such embeddings, or will any sufficiently rich parametrized family of continuous maps necessarily include a nonembedding? 

The coindex $\co(Y)$ of a $\Z/2$-space $Y$ is the largest dimension $k$ such that a $\Z/2$-map ${S^k \to Y}$ from the $k$-dimensional sphere~$S^k$ equipped with the antipodal $\Z/2$-action exists.  The coindex may be considered as some measure of the size or complexity of a space, though it is only meaningful when the $\Z/2$-action is free, since otherwise the constant map from any given sphere to a fixed point is $\Z/2$-equivariant.

Given $\ell \in \{1,2,\dots,d\}$ we fix a $\Z/2$-action on $\R^d$ that flips the sign of the first $\ell$ coordinates, that is, the generator $\varepsilon$ acts by $\varepsilon \cdot (x_1, \dots, x_d) = (-x_1, \dots, -x_\ell, x_{\ell+1}, \dots, x_d)$. The space $\Emb(X,\R^d)$ with the naturally induced $\Z/2$-action $(\varepsilon \cdot f)(x) = \varepsilon \cdot f(x)$ will be denoted by $\Emb_\ell(X,\R^d)$.  Observe that this action is free if and only if $X$ does not embed into~$\R^{d-\ell}$.  

We formalize the question above by developing bounds for the coindex of $\Emb_\ell(X, \R^d)$.  Our interest in this specific parameter stems from the fact that it has been studied in various guises, albeit in an implicit manner.  Besides the existence question for embeddings, the two main examples of this previous study are \emph{chirality}, related to the case $\ell = 1$, and the existence of \emph{nonsingular bilinear maps}, related to the case $\ell = d$.  Our main result is a simple combinatorial statement which summarizes and extends previous results on nonembeddability, chirality, and nonsingular bilinear maps.  Before stating the main result, we discuss several of these applications.

\subsection{Chirality}
The lower bound $\co(\Emb_1(X,\R^d)) \ge 1$ simply means that some embedding $f\colon X \hookrightarrow \R^d$ can be isotoped into its \emph{mirror image}, an embedding obtained from $f$ by composing with an orientation-reversing homeomorphism.
Simon~\cite{Simon} exhibited a specific embedding of~$K_{3,3}$, the complete bipartite graph on $3+3$ vertices, into $\R^3$ that is \emph{chiral}, that is, it cannot be isotoped into its mirror image.  Flapan~\cite{Flapan1989} showed that every embedding $K_{3,3} \hookrightarrow \R^3$ is chiral. (Actually, Flapan showed that $K_{3,3}$ is chiral in an a priori stronger sense; we will explain the details in Section~\ref{subsec:chirality}.) 
This strengthens the nonplanarity of~$K_{3,3}$: If there was an embedding of $K_{3,3}$ into~$\R^2$, then thinking of $\R^2 \subset \R^3$ this embedding would be equal to its mirror image. Other classical nonembeddability results have been generalized in the same way. For example, it is known that no embedding $\R P^2 \hookrightarrow \R^4$ is isotopic to its mirror image (see~\cite{Auckly2018}) and the same is true for embeddings $\C P^2 \hookrightarrow \R^7$. See Skopenkov~\cite{Skopenkov2010} for a classification of smooth embeddings $\C P^2 \hookrightarrow \R^7$ up to smooth isotopy.

 As $\co(\Emb_1(X,\R^d)) \ge 1$ is equivalent to some embedding $X \hookrightarrow \R^d$ being isotopic to its mirror image, improved lower bounds for $\co(\Emb_\ell(X,\R^d))$ can be regarded as \emph{higher achirality}.  For odd $d$, $\co(\Emb_d(X,\R^d)) = 0$ is also equivalent to every embedding $f\colon X \hookrightarrow \R^d$ being chiral, since $-f$ is also a mirror image of~$f$. In this sense the following result extends the nonembeddability results for $\R P^2$ and $\C P^2$ and the chirality result for~$\C P^2$.
 
 \begin{corollary}
 \label{cor:coindex-rp2}
 	For any dimension $d\ge 1$, $$\co(\Emb_d(\R P^2, \R^d)) = 4\Big\lfloor \frac{d}{4} \Big\rfloor -1 \quad \text{and} \quad \co(\Emb_d(\C P^2, \R^d)) = \begin{cases} 8k \qquad \qquad d = 8k+7 \\ 8\Big\lfloor \frac{d}{8} \Big\rfloor - 1 \quad \text{otherwise} \end{cases}. $$
 \end{corollary}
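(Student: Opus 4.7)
Proof proposal.

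The strategy is to establish the claimed values as matching upper and lower bounds. The upper bound is to be obtained by applying the paper's main combinatorial theorem (previewed but not yet stated in the excerpt) to suitable triangulations of $\R P^2$ and $\C P^2$. The lower bound is a direct construction of a $\Z/2$-equivariant map $S^k \to \Emb_d(X,\R^d)$ built from the multiplicative structure of the quaternions $\mathbb{H} = \R^4$ and the octonions $\mathbb{O} = \R^8$; recall that with $\ell = d$ the generator $\varepsilon$ acts on $\R^d$ as $-\mathrm{id}$, so an equivariant family $\{\Phi_\xi\}_{\xi \in S^k}$ must satisfy $\Phi_{-\xi} = -\Phi_\xi$.

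For $\R P^2$, fix any embedding $\iota_0 \colon \R P^2 \hookrightarrow \mathbb{H}$, translated generically so that $\iota_0(x) \neq 0$ for every $x$. With $k = \lfloor d/4 \rfloor$, split $\R^d = \mathbb{H}^k \oplus \R^{d-4k}$ and for $\xi = (q_1,\dots,q_k) \in S^{4k-1} \subset \mathbb{H}^k$ set
\[
\Phi_\xi(x) = (q_1\,\iota_0(x),\, \dots,\, q_k\,\iota_0(x),\, 0,\, \dots,\, 0).
\]
Because $\mathbb{H}$ is a normed division algebra, left multiplication by any nonzero quaternion is an $\R$-linear isomorphism of $\mathbb{H}$; at least one $q_i$ is nonzero, so $\Phi_\xi(x) = \Phi_\xi(y)$ forces $\iota_0(x) = \iota_0(y)$ and hence $x = y$. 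Thus each $\Phi_\xi$ is an embedding, the assignment $\xi \mapsto \Phi_\xi$ is continuous and $\Z/2$-equivariant, and $\co(\Emb_d(\R P^2,\R^d)) \ge 4\lfloor d/4 \rfloor - 1$.

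For $\C P^2$, use the classical embeddings $\iota_0 \colon \C P^2 \hookrightarrow \R^7 \subset \R^8 = \mathbb{O}$ and $\iota_1 \colon \C P^2 \hookrightarrow \R^7$, both translated to avoid the origin. With $k = \lfloor d/8 \rfloor$, the analogous octonionic construction over $S^{8k-1} \subset \mathbb{O}^k$ gives $\co \ge 8k-1$ whenever $d \not\equiv 7 \pmod{8}$; non-associativity of $\mathbb{O}$ is harmless because only left multiplication by a single octonion is used, which remains an $\R$-linear isomorphism of $\mathbb{O}$. When $d = 8k+7$, the extra $\R^7$ summand carries one additional real parameter: for $(q_1,\dots,q_k,t) \in S^{8k} \subset \mathbb{O}^k \oplus \R$ set
\[
\Phi_{(q_1,\dots,q_k,t)}(x) = (q_1\,\iota_0(x),\, \dots,\, q_k\,\iota_0(x),\, t\,\iota_1(x)) \in \mathbb{O}^k \oplus \R^7.
\]
If $t \neq 0$ the last $\R^7$-block already embeds $\C P^2$; if $t = 0$ some $q_i \neq 0$ and the octonionic argument applies. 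Either way $\Phi$ is a $\Z/2$-equivariant family of embeddings, giving $\co \ge 8k$.

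The main obstacle is the matching upper bound, which is to be extracted from the paper's combinatorial framework rather than from direct topology. I expect it will reduce to sharp Hurwitz--Radon--Adams type bounds for nonsingular bilinear maps in the relevant range, accounting for the divisibility by $4$ for $\R P^2$ and by $8$ for $\C P^2$, together with the exceptional increment at $d \equiv 7 \pmod{8}$; this last feature should correspond to the sharp embedding dimension $7$ of $\C P^2$ feeding into the combinatorial invariant of its triangulation. A secondary technical point is that $\iota_0$ and $\iota_1$ must miss the origin, which is immediate after a generic translation.
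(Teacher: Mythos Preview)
Your lower-bound constructions are correct and coincide with the paper's: the map $\Phi_\xi$ built from quaternionic (resp.\ octonionic) multiplication is exactly the family of embeddings arising from the nonsingular bilinear map $\R^4 \times \R^{4k} \to \R^{4k}$ (resp.\ $\R^8 \times \R^{8k} \to \R^{8k}$) of Lemma~\ref{lem:bilinear-exist}(c),(d) via Lemma~\ref{lem:lowerbound}. (The translation so that $\iota_0$ avoids $0$ is unnecessary: injectivity of $\Phi_\xi$ only uses that left multiplication by a nonzero element is injective on \emph{differences} $\iota_0(x)-\iota_0(y)$.) For the exceptional case $d = 8k+7$ your hybrid octonion-plus-scalar family also works, but the paper instead uses a single nonsingular bilinear map $\R^7 \times \R^{8k+1} \to \R^{8k+7}$ given by real polynomial multiplication (Lemma~\ref{lem:bilinear-exist}(e) with $p=6$), which is cleaner and fits the same template as the other cases.

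The genuine gap is the upper bound, which you explicitly leave open. Here is what the paper does, and it is not merely an appeal to Hurwitz--Radon--Adams bounds: one applies Theorem~\ref{thm:coindex} to the \emph{minimal} triangulations $\Sigma_{\R P^2}$ (six vertices) and $\Sigma_{\C P^2}$ (nine vertices). The crucial combinatorial input is that in each of these triangulations any two nonfaces intersect, so $\chi(\KG(\Sigma)) = 1$; this is the Arnoux--Marin complementarity property of $(3d/2+3)$-vertex triangulations of $d$-manifolds. Then $m = d - n + \chi + 2$ equals $d-3$ for $\R P^2$ and $d-6$ for $\C P^2$, and one checks the binary-digit hypothesis of Theorem~\ref{thm:coindex} at $d = 4k+3$ (where $m = 4k$, $\ell - m = 3$) and at $d = 8k+6,\, 8k+7$ (where $m = 8k,\, 8k+1$ and $\ell - m = 6$). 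Monotonicity of the coindex in $d$ fills in the remaining values. Your intuition that the $d \equiv 7 \pmod 8$ increment reflects the embedding dimension $7$ of $\C P^2$ is correct for the lower bound, but the matching upper bound comes from the vertex count of the K\"uhnel triangulation together with $\chi(\KG)=1$, not directly from an embedding dimension.
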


For a simplicial complex $\Sigma$ a continuous map $f\colon \Sigma \to \R^d$ is an \emph{almost-embedding} if $f(\sigma) \cap f(\tau) = \emptyset$ for any two vertex-disjoint faces $\sigma$ and $\tau$ of~$\Sigma$.  As for embeddings, we write $\AEmb_\ell(\Sigma, \R^d)$ for the space of almost-embeddings $\AEmb(\Sigma,\R^d)$ equipped with the compact-open topology and the $\Z/2$-action that reverses the sign of the first $\ell$ coordinates.  In fact, Corollary~\ref{cor:coindex-rp2} holds for the space of almost-embeddings of the minimal triangulations of projective planes over $\R$ and~$\C$. We also generalize the chirality of embeddings $\R P^2 \hookrightarrow \R^4$ and $\C P^2 \hookrightarrow \R^7$ by extending results to almost-embeddings.  

\begin{corollary}
\label{cor:rp2}
	Let $\Sigma_{\R P^2}$ and $\Sigma_{\C P^2}$ denote the minimal triangulations of $\R P^2$ and $\C P^2$, respectively. Given any almost-embedding $\Sigma_{\R P^2} \to \R^4$ there is no homotopy to its mirror image through almost-embeddings. The same is true for any almost-embedding $\Sigma_{\C P^2} \to \R^7$.  That is,
\[
\co(\AEmb_1(\Sigma_{\R P^2}, \R^4)) = 0 \quad \text{and} \quad \co(\AEmb_1(\Sigma_{\C P^2}, \R^7)) = 0.
\]
\end{corollary}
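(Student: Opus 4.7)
My plan is to derive both equalities by applying the paper's main combinatorial upper bound for the coindex of spaces of almost-embeddings, specialized to the minimal triangulations $\Sigma_{\R P^2}$ and $\Sigma_{\C P^2}$.

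First I would establish the lower bound $\co(\AEmb_1(\Sigma, \R^d)) \geq 0$. This amounts to producing a $\Z/2$-equivariant map $S^0 \to \AEmb_1(\Sigma, \R^d)$, equivalently a point of $\AEmb_1$ not fixed by the reflection. Nonemptiness follows from the classical smooth embeddings $\R P^2 \hookrightarrow \R^4$ and $\C P^2 \hookrightarrow \R^7$, which restrict to almost-embeddings of any triangulation. Absence of fixed points means no almost-embedding lands in the hyperplane $\{x_1 = 0\} \cong \R^{d-1}$; this follows from the fact that the minimal triangulations do not almost-embed into $\R^3$ or $\R^6$ respectively, a Van Kampen--Flores type statement for these specific complexes that is strictly stronger than nonembeddability of the underlying manifolds.

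For the matching upper bound $\co(\AEmb_1(\Sigma,\R^d)) \leq 0$ I would invoke the main combinatorial theorem of the paper, which bounds $\co(\AEmb_\ell(\Sigma,\R^d))$ by a combinatorial/cohomological invariant of the deleted product of $\Sigma$ compared against the equivariant topology of $\R^d \setminus \{0\} \simeq S^{d-1}$ under reflection of the first $\ell$ coordinates. In the case $\ell = 1$ the target carries only a single reflection, so the relevant equivariant obstruction already appears in degree~$1$, and the theorem reduces the claim to a nonvanishing verification at the combinatorial level. Because passing to the minimal triangulation only weakens the almost-embedding constraint (there are fewer vertex-disjoint face pairs, hence more almost-embeddings to chirally connect), working with $\Sigma_{\R P^2}$ and $\Sigma_{\C P^2}$ yields the strongest possible chirality conclusion.

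The main obstacle is the explicit verification that the combinatorial invariant equals $0$ for these triangulations and parameters. I expect this to mirror the classical achirality proofs for smooth embeddings of $\R P^2 \hookrightarrow \R^4$ and $\C P^2 \hookrightarrow \R^7$: on the manifold side those rely on a normal Euler number or linking-type invariant whose mod-$2$ reduction is controlled by $w_1^2$ of $\R P^2$ or a degree-$4$ analogue for $\C P^2$. The task is to lift this invariant to an equivariant cochain on the deleted product of the minimal triangulation and to show that it remains nontrivial, ruling out any $\Z/2$-map $S^1 \to \AEmb_1(\Sigma,\R^d)$. Sharpness of the bound then follows from the existence of the underlying smooth embeddings, and the unified combinatorial framework of the main theorem should give both cases simultaneously, differing only in the target dimension and the degree of the obstructing class.
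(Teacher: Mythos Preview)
Your high-level outline is right---the upper bound comes from the paper's main theorem (Theorem~\ref{thm:coindex}) and the lower bound from the existence of an almost-embedding not fixed by the reflection---but you misidentify what ``the explicit verification of the combinatorial invariant'' actually is, and your proposed verification via normal Euler numbers and lifted equivariant cochains is not what the argument requires.

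The combinatorial input to Theorem~\ref{thm:coindex} is simply the chromatic number $\chi(\KG(\Sigma))$ of the Kneser graph of nonfaces. For both $\Sigma_{\R P^2}$ (six vertices) and $\Sigma_{\C P^2}$ (nine vertices) this chromatic number equals~$1$, because these minimal triangulations have the property that for every bipartition of the vertex set exactly one side is a face; in particular no two nonfaces are disjoint, so $\KG(\Sigma)$ has no edges. (The paper records this just before the proof of Corollary~\ref{cor:coindex-rp2}, citing Arnoux--Marin.) With $n=6$, $d=4$, $\ell=1$ (respectively $n=9$, $d=7$, $\ell=1$) one gets $m = d - n + \chi(\KG(\Sigma)) + 2 = 1 = \ell$, and $m=1$, $\ell-m=0$ trivially share no binary ones, so Theorem~\ref{thm:coindex} gives $\co(\AEmb_1(\Sigma,\R^d)) \le m-1 = 0$. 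That is the entire upper-bound argument; no Stiefel--Whitney classes, linking invariants, or deleted-product cochains enter.

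Your paragraph about mirroring the smooth chirality proofs via $w_1^2$ or a degree-$4$ analogue is therefore a wrong turn: those arguments establish chirality of specific smooth embeddings using characteristic-class information about the manifold, whereas the paper's mechanism is entirely on the combinatorial side of the triangulation and goes through the Kneser coloring and Lemma~\ref{lem:product-of-spheres}. Your lower-bound discussion is fine (and in fact the non-almost-embeddability of $\Sigma_{\R P^2}$ into $\R^3$ and $\Sigma_{\C P^2}$ into $\R^6$ is itself the $m=0$ instance of the same theorem), but the paper's one-line proof does not even spell this out.
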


The Van Kampen--Flores theorem~\cite{Flores1933, VanKampen1933} generalizes the nonplanarity of $K_{3,3}$ and of the complete graph $K_5$ to higher-dimensional simplicial complexes: It asserts that the $k$-skeleton $\Delta_{2k+2}^{(k)}$ of the $(2k+2)$-simplex $\Delta_{2k+2}$ and the $(k+1)$-fold join $[3]^{*(k+1)}$ of a $3$-point space do not embed into~$\R^{2k}$. The case $k=1$ is the nonplanarity of $K_5$ and of~$K_{3,3}$. We deduce a chiral generalization of this classical nonembeddability result:

\begin{corollary}
\label{cor:vKF}
	Given any almost-embedding $\Delta_{2k+2}^{(k)} \to \R^{2k+1}$ there is no homotopy to its mirror image through almost-embeddings. The same is true for any almost-embedding $[3]^{*(k+1)} \to \R^{2k+1}$.  That is,
	\[
\co(\AEmb_1(\Delta_{2k+2}^{(k)}, \R^{2k+1})) = 0 \quad \text{and} \quad \co(\AEmb_1([3]^{*(k+1)}, \R^{2k+1})) = 0.
\]
\end{corollary}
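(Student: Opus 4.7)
The plan is to deduce this corollary from the paper's main combinatorial upper bound on $\co(\AEmb_\ell(X,\R^d))$, specialized to $\ell=1$, $d=2k+1$, and $X$ being either of the two Van Kampen--Flores complexes. The combinatorial input needed is the classical Van Kampen--Flores obstruction: the nonexistence of a $\Z/2$-equivariant map $X^{*2}_\Delta \to S^{2k-1}$ (with the factor-swap action on the deleted join and the antipodal action on the sphere), which is the same statement as the nonembeddability $X \not\hookrightarrow \R^{2k}$.

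For $X = \Delta_{2k+2}^{(k)}$ this is the classical Van Kampen--Flores theorem itself. For $X = [3]^{*(k+1)}$ it can be read off even more transparently by exploiting the fact that the deleted join distributes over the join operation:
\[
\bigl([3]^{*(k+1)}\bigr)^{*2}_\Delta \;\cong\; \bigl([3]^{*2}_\Delta\bigr)^{*(k+1)} \;\cong\; \bigl(S^1\bigr)^{*(k+1)} \;\cong\; S^{2k+1},
\]
where $\Z/2$ acts antipodally (the base case $[3]^{*2}_\Delta$ is the hexagonal $6$-cycle, on which the swap involution is the antipodal map, as one checks by listing the six vertices $i^\pm$ and the six edges $\{i^+, j^-\}$, $i \neq j$). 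Feeding this into the main theorem yields the upper bound $\co(\AEmb_1(X,\R^{2k+1})) \leq 0$.

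For the matching lower bound $\co \geq 0$, I would check that $\AEmb_1(X, \R^{2k+1})$ is nonempty and carries a free $\Z/2$-action. Nonemptiness follows from general position, since both complexes are $k$-dimensional and any $k$-dimensional simplicial complex embeds in $\R^{2k+1}$. Freeness of the action is again exactly Van Kampen--Flores: a fixed almost-embedding $f = \varepsilon \cdot f$ would have image contained in the reflection hyperplane $\{x_1 = 0\} \cong \R^{2k}$, producing an almost-embedding $X \to \R^{2k}$ that contradicts the classical theorem. A nonempty $\Z/2$-space with a free action has coindex at least $0$, so combining the two bounds gives $\co = 0$.

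The main obstacle I anticipate is calibrating the main theorem's combinatorial hypothesis so that it keys on precisely the classical Van Kampen--Flores obstruction (rather than a strictly stronger equivariant invariant). For both complexes in question the classical obstruction is known to be sharp, so no strengthening should be needed; the bookkeeping is just the shift from the nonembedding regime $(\ell=0,\,d=2k)$ to the chirality regime $(\ell=1,\,d=2k+1)$, which trades one dimension of target for one dimension of $\Z/2$-action on the target.
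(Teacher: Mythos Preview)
Your high-level plan---apply the main theorem with $\ell=1$ and $d=2k+1$---is exactly what the paper does. However, the combinatorial input you describe (the equivariant homotopy type of $X^{*2}_\Delta$) is not the hypothesis of Theorem~\ref{thm:coindex}; that theorem takes as input the chromatic number $\chi(\KG(\Sigma))$ of the Kneser graph of nonfaces. The paper's proof is simply a direct computation of this quantity: for $\Sigma = \Delta_{2k+2}^{(k)}$ on $n = 2k+3$ vertices, any two nonfaces (each of size at least $k+2$) must intersect, so $\KG(\Sigma)$ has no edges and $\chi(\KG(\Sigma)) = 1$; for $\Sigma = [3]^{*(k+1)}$ on $n = 3k+3$ vertices, the minimal nonfaces are the edges within each of the $k+1$ copies of~$[3]$, and coloring a nonface by the index of a block in which it contains such an edge gives $\chi(\KG(\Sigma)) = k+1$. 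In both cases $m = d - n + \chi(\KG(\Sigma)) + 2 = 1 = \ell$, so the binary condition on $(m,\ell-m) = (1,0)$ is vacuous and Theorem~\ref{thm:coindex} yields $\co \le 0$.

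Your deleted-join computation $([3]^{*(k+1)})^{*2}_\Delta \cong S^{2k+1}$ is correct and is the classical route to the Van Kampen--Flores obstruction, but it does not plug directly into Theorem~\ref{thm:coindex}. (The connection runs through Sarkaria's inequality relating $\ind(\Sigma^{*2}_\Delta)$ to $n - \chi(\KG(\Sigma))$; note also that the proof of Theorem~\ref{thm:coindex} via Theorem~\ref{thm:index} and Lemma~\ref{lem:Kneser-map} works on the deleted join of the ambient simplex $(\Delta_{n-1})^{*2}_\Delta$, not of~$\Sigma$ itself.) Your lower-bound argument---nonemptiness by general position plus freeness of the $\Z/2$-action---is correct and is needed for the stated equality $\co = 0$; the paper's proof addresses only the upper bound.
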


\subsection{Nonsingular bilinear maps}
A bilinear map $B\colon \R^{p+1} \times \R^{q+1} \to \R^d$ is called \emph{nonsingular} if ${B(x,y) = 0}$ implies that $x = 0$ or $y = 0$.  Nonsingular bilinear maps are related to tangent vector fields on spheres, immersions and embeddings of real projective spaces, skew fibrations, and totally nonparallel immersions (see Section \ref{sec:nonsingularhistory} for a brief history), and hence have been studied extensively.  Nevertheless, the triples $(p+1,q+1,d)$ of dimensions which admit nonsingular bilinear maps are far from classified.

Given a nonsingular bilinear map $B\colon \R^{p+1} \times \R^{q+1} \to \R^d$, the restriction to the unit ball $D^{p+1}$ in $\R^{p+1}$ and the unit sphere $S^q$ in $\R^{q+1}$ yields a map $D^{p+1} \times S^q \to \R^d$ that is $\Z/2$-equivariant, or \emph{skew}, in the second entry. Moreover, for any fixed $y_0 \in S^q$ the restriction $D^{p+1} \times \{y_0\} \to \R^d$ is an embedding, and thus by currying the map $D^{p+1} \times S^q \to \R^d$ we see that the existence of a nonsingular bilinear map $\R^{p+1} \times \R^{q+1} \to \R^d$ implies $\co(\Emb_d(D^{p+1}, \R^d)) \ge q$.  Similarly, a nonsingular bilinear map induces a $(\Z/2)^2$-equivariant (or \emph{biskew}) map $S^p \times S^q \to \R^d$ which avoids zero, hence $\co(\Emb_d(S^p, \R^d)) \ge q$.

Using known results for bilinear and biskew maps, it is straightforward to show that, if the integers $p$ and $d-p$ do not share any common ones in their binary expansions, then
\begin{align}
\label{eqn:embsm}
\co(\Emb_d(S^p, \R^d)) = d - p -1;
\end{align}
see Corollary \ref{cor:embsm}.

We will prove a discretized generalization of this result:

\begin{corollary}
\label{cor:parametrized-radon}
	If the nonnegative integers $p$ and $d-p$ do not share any common ones in their binary expansions then $\co(\AEmb_d(\partial \Delta_{p+1}, \R^{d})) = d-p-1$.
\end{corollary}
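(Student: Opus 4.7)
The plan is to sandwich $\co(\AEmb_d(\partial\Delta_{p+1}, \R^d))$ between $d - p - 1$ from both sides. The lower bound follows readily from \eqref{eqn:embsm}: the homeomorphism $|\partial\Delta_{p+1}| \cong S^p$ turns every embedding $S^p \hookrightarrow \R^d$ into an almost-embedding $\partial\Delta_{p+1} \to \R^d$, giving a $\Z/2$-equivariant inclusion $\Emb_d(S^p, \R^d) \hookrightarrow \AEmb_d(\partial\Delta_{p+1}, \R^d)$, so the latter's coindex is at least $d - p - 1$.

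For the upper bound I would argue by contradiction. Assume a $\Z/2$-equivariant map $S^{d-p} \to \AEmb_d(\partial\Delta_{p+1}, \R^d)$ exists and curry it to $F \colon S^{d-p} \times \partial\Delta_{p+1} \to \R^d$ with $F(-s, x) = -F(s, x)$ and each $F(s, \cdot)$ an almost-embedding. Realize $\partial\Delta_{p+1}$ with vertex set $V = \{v_0, \dots, v_{p+1}\}$ as the boundary of the standard simplex in $\R^{p+2}$, and consider the equator
\[
S^p_{\mathrm{eq}} := \Bigl\{ z \in \R^{p+2} : \|z\|_1 = 1, \ \sum_{i} z_i = 0 \Bigr\}
\]
of the ambient $\ell_1$-sphere. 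Each $z \in S^p_{\mathrm{eq}}$ decomposes uniquely as $z = \tfrac{1}{2}x - \tfrac{1}{2}y$ with $x := 2 z^+$ and $y := 2 z^-$, and the constraint $\sum_i z_i = 0$ forces $x, y \in \partial\Delta_{p+1}$ to have disjoint supports. Define
\[
\Phi \colon S^{d-p} \times S^p_{\mathrm{eq}} \to \R^d, \qquad \Phi(s, z) = F(s, x) - F(s, y).
\]
Almost-embedding gives $\Phi(s, z) \ne 0$; skewness of $F$ gives $\Phi(-s, z) = -\Phi(s, z)$; and $z \mapsto -z$ swaps the decomposition's $x$ and $y$, so $\Phi(s, -z) = -\Phi(s, z)$. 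Thus $\Phi$ is a biskew map $S^{d-p} \times S^p \to \R^d \setminus \{0\}$.

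The contradiction then comes from the classical Stiefel--Hopf obstruction: descending $\Phi$ to projective spaces yields $\bar\Phi \colon \R P^{d-p} \times \R P^p \to \R P^{d-1}$, and tracking the $\pi_1$-action shows $\bar\Phi^*$ sends the degree-one generator of $H^*(\R P^{d-1}; \Z/2)$ to $x + y$. From $z^d = 0$ in the target we obtain $(x+y)^d = 0$ in $\Z/2[x, y]/(x^{d-p+1}, y^{p+1})$, forcing the coefficient of the unique surviving monomial $x^{d-p} y^p$ to vanish, i.e.\ $\binom{d}{d-p} \equiv 0 \pmod 2$. But by hypothesis $p$ and $d-p$ share no binary ones, so Kummer's theorem makes $\binom{d}{d-p}$ odd, a contradiction, yielding $\co(\AEmb_d(\partial\Delta_{p+1}, \R^d)) \le d - p - 1$. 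The main obstacle is selecting the equatorial $p$-sphere inside $(\partial\Delta_{p+1})^{*2}_\Delta$ carefully enough that $\Phi$ targets $\R^d \setminus 0$ rather than the larger $\R^{d+1} \setminus 0$ produced by the usual augmented deleted-join construction; this drop of one target dimension is precisely what makes the Stiefel--Hopf obstruction sharp for $\binom{d}{d-p}$.
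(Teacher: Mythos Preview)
Your proof is correct. The lower bound is handled exactly as in the paper (via the equivariant inclusion $\Emb_d(S^p,\R^d)\hookrightarrow\AEmb_d(\partial\Delta_{p+1},\R^d)$ and Corollary~\ref{cor:embsm}). For the upper bound the paper takes a less direct route: it applies the general Theorem~\ref{thm:coindex} to the \emph{full} simplex $\Delta_{p+1}$ (where $\chi(\KG(\Delta_{p+1}))=0$, so $m=d-p$) and then observes separately that every almost-embedding of $\partial\Delta_{p+1}$ extends to one of $\Delta_{p+1}$, since the top cell is irrelevant to the almost-embedding condition. Unwinding Theorem~\ref{thm:coindex} in this instance produces a $(\Z/2)^2$-map $(\Delta_{p+1})^{*2}_\Delta\times S^{d-p}\cong S^{p+1}\times S^{d-p}\to\R^{d+1}$, with the extra target coordinate recording $\lambda_1-\lambda_2$, and then invokes Lemma~\ref{lem:product-of-spheres}. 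Your argument short-circuits this: by restricting at the outset to the equator $\lambda_1=\lambda_2=\frac12$ (your $S^p_{\mathrm{eq}}$, which is precisely the locus in $(\Delta_{p+1})^{*2}_\Delta$ where both join factors automatically land in $\partial\Delta_{p+1}$), you simultaneously drop one sphere dimension from the domain and one coordinate from the target, obtaining a biskew map $S^{d-p}\times S^p\to\R^d\setminus\{0\}$ directly, to which the classical Stiefel--Hopf obstruction (the $k=2$ case of Proposition~\ref{prop:cohomind}) applies. The paper's route has the virtue of exhibiting the result as one instance of a single general theorem; yours is self-contained, avoids the Kneser-coloring machinery entirely, and does not need the extension step from $\partial\Delta_{p+1}$ to~$\Delta_{p+1}$.
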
	

It follows from the Borsuk-Ulam theorem that $\Emb(S^n, \R^n)$ is empty; this is the special case $p=d$ of (\ref{eqn:embsm}).  The topological Radon theorem of Bajm\'oczy and B\'ar\'any~\cite{BB}  
asserts that there is no almost-embedding $ \partial\Delta_{n+1} \to \R^n$; this is the special case $p = d$ of Corollary \ref{cor:parametrized-radon}.

More generally, the result that any biskew map $S^2 \times S^1 \to \R^3$ must have a zero implies that when turning $S^2$ inside-out, which is possible through immersions by work of Smale \cite{Smale}, then at some point in time two antipodal points of $S^2$ are mapped to the same point. Corollary~\ref{cor:parametrized-radon} in this case asserts that when turning the boundary of a tetrahedron inside-out, two disjoint faces of the tetrahedron must overlap in the image at some point in time.

Together with existence results for nonsingular bilinear maps, Corollary \ref{cor:parametrized-radon} yields the exact values of $\co(\AEmb_d(\partial\Delta_{p+1}, \R^{d}))$ for $p \leq 8$, generalizing classical nonexistence results for nonsingular bilinear maps, as tabulated by Berger and Friedland~\cite{BergerFriedland}; see Figure~\ref{fig:embsm} (in Section~\ref{sec:nonsingular}).

\subsection{Statement of the main result}
Our main result summarizes and extends results on chirality of embeddings and the nonexistence of nonsingular bilinear maps in one simple combinatorial statement. For a simplicial complex $\Sigma$ on ground set $[n] = \{1,2,\dots,n\}$, we denote by $\KG(\Sigma)$ the \emph{Kneser graph of its nonfaces}, that is, the graph whose vertices correspond to those subsets of $[n]$ that do not form a face of~$\Sigma$, and with edges between vertices corresponding to disjoint faces. For a graph $G$ we denote by $\chi(G)$ its \emph{chromatic number}, that is, the least number of colors $c$ needed to color its vertices such that the two endpoints of every edge receive distinct colors.  A proper coloring of the Kneser graph of minimal nonfaces induces a proper coloring of the Kneser graph of all nonfaces (by coloring some nonface $\sigma$ by the color of some minimal nonface $\tau \subset \sigma$).

Our main result on the coindex of the space of embeddings is the following:

\begin{theorem}
\label{thm:coindex}
	Let $\Sigma$ be a simplicial complex on ground set~$[n]$. Let $d$ and $\ell$ be nonnegative integers such that $m = d-n+\chi(\KG(\Sigma))+2 \le \ell$. If the integers $m$ and $\ell-m$ do not share common ones in their binary expansions, then $\AEmb_\ell(\Sigma, \R^d)$, and thus $\Emb_\ell(\Sigma, \R^d)$, has coindex at most~${m-1}$.
\end{theorem}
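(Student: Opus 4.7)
The plan is to argue by contradiction. Assume a $\Z/2$-equivariant map $\phi \colon S^m \to \AEmb_\ell(\Sigma, \R^d)$ exists. For each $s \in S^m$, the almost-embedding $\phi_s$ induces in the standard way a $\Z/2$-equivariant map $\tilde\phi_s \colon \Sigma^{*2}_\Delta \to S^d$ from the deleted join (the almost-embedding property is exactly what prevents the defining expression from passing through zero). Assembling these produces a $G$-equivariant map
\[
\Phi \colon S^m \times \Sigma^{*2}_\Delta \to S^d \subset \R^{d+1},
\]
where $G = \Z/2 \times \Z/2$ has its first generator $g_1$ act antipodally on $S^m$ and by $\varepsilon_\ell$ on the first $d$ coordinates of $\R^{d+1}$ (leaving the extra join coordinate fixed), while its second generator $g_2$ swaps the two halves of the deleted join and acts as the full antipode on $\R^{d+1}$.

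Both $\Z/2$-actions on the source are free, so $\Phi$ descends to a nowhere-zero section of the rank-$(d+1)$ real vector bundle $E$ over $Y = \R P^m \times (\Sigma^{*2}_\Delta/\Z/2)$, forcing the top Stiefel--Whitney class $w_{d+1}(E)$ to vanish. As a $G$-representation, $\R^{d+1}$ splits into $\ell$ copies of the character where both $g_1, g_2$ act as $-1$ together with $d-\ell+1$ copies of the character where only $g_2$ acts as $-1$. Writing $a \in H^1(\R P^m; \Z/2)$ and $b \in H^1(\Sigma^{*2}_\Delta/\Z/2; \Z/2)$ for the canonical generators, pullback of total Stiefel--Whitney class from $BG$ gives
\[
w_{d+1}(E) = (a+b)^\ell \, b^{\,d-\ell+1} = \sum_{i=0}^{\ell} \binom{\ell}{i} \, a^i \, b^{\,d+1-i}.
\]

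The defining formula for $m$ rewrites the $i=m$ exponent as $d+1-m = n - \chi(\KG(\Sigma)) - 1$, so the monomial at $i = m$ is $\binom{\ell}{m} \, a^m \, b^{\,n - \chi(\KG(\Sigma))-1}$. By Lucas' theorem $\binom{\ell}{m}$ is odd precisely under the no-common-bits hypothesis on $m$ and $\ell - m$, and $a^m \neq 0$ because $a$ has height $m$ in $H^*(\R P^m;\Z/2)$. By the K\"unneth theorem the distinct monomials $a^i b^{d+1-i}$ are linearly independent in $H^*(Y;\Z/2)$, so it remains to show
\[
b^{\,n - \chi(\KG(\Sigma)) - 1} \neq 0 \quad \text{in } H^*(\Sigma^{*2}_\Delta/\Z/2; \Z/2),
\]
equivalently $\cohomind(\Sigma^{*2}_\Delta) \geq n - \chi(\KG(\Sigma)) - 1$. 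This Sarkaria-type cohomological lower bound on the deleted join is the main obstacle I anticipate; I plan to deduce it from the proper $c$-coloring of $\KG(\Sigma)$ by using the coloring to construct a $\Z/2$-equivariant map from the boundary of an $(n-c)$-dimensional cross-polytope (a standard $(n-c-1)$-sphere) into $\Sigma^{*2}_\Delta$, in the spirit of the Sarkaria/Lov\'asz topological lower bound for chromatic numbers. Once this combinatorial lemma is in place, the $i=m$ monomial is a nonzero basis element of $H^{d+1}(Y;\Z/2)$, contradicting $w_{d+1}(E) = 0$ and establishing the theorem.
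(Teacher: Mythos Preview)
Your bundle/Stiefel--Whitney reduction is sound and gives a valid alternative to the paper's proof, but it is packaged quite differently. The paper never works on $\Sigma^{*2}_\Delta \times S^m$; instead it extends the parametrized almost-embedding arbitrarily from $\Sigma$ to all of~$\Delta_{n-1}$, works on $(\Delta_{n-1})^{*2}_\Delta \times S^m \cong S^{n-1}\times S^m$, and augments the Gauss-type difference map by $c$ extra ``detector'' coordinates built directly from the Kneser coloring (Lemma~\ref{lem:Kneser-map}: for each color class~$j$ of pairwise-intersecting nonfaces one records $\lambda_1\mathrm{dist}(x_1,\Sigma_j)-\lambda_2\mathrm{dist}(x_2,\Sigma_j)$, which vanishes only when $x_1,x_2\in\Sigma$). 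A product-of-spheres Borsuk--Ulam theorem (Lemma~\ref{lem:product-of-spheres}) is then invoked in place of your characteristic-class computation. In effect the paper hard-wires the Sarkaria step into the test map, whereas you isolate it as the abstract inequality $\cohomind(\Sigma^{*2}_\Delta)\ge n-c-1$; the content is the same, and your packaging makes the role of the Stiefel--Whitney height more visible.

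The one place that needs repair is your plan for that inequality. You propose to build a $\Z/2$-map $S^{n-c-1}\to\Sigma^{*2}_\Delta$, i.e.\ to prove the stronger bound $\co(\Sigma^{*2}_\Delta)\ge n-c-1$. The natural attempt---pick a common vertex from each color class of nonfaces, delete those $c$ vertices, and use the full sub-simplex on the remaining $n-c$ vertices---fails, because an intersecting family need not have a common element (take the minimal nonfaces to be the lines of a Fano plane). I do not know a general construction of such a sphere map, and the standard Sarkaria argument does not produce one. What \emph{is} easy, and is all your argument needs, is the height bound itself: the $c$ distance coordinates above define a $\Z/2$-map $S^{n-1}\setminus\Sigma^{*2}_\Delta\to S^{c-1}$, so the complement has $\cohomind\le c-1$, and cup-length subadditivity $\cohomind(A\cup B)\le\cohomind(A)+\cohomind(B)+1$ applied to $S^{n-1}$ yields $\cohomind(\Sigma^{*2}_\Delta)\ge n-c-1$. (Equivalently, run the Sarkaria join trick with $\cohomind$ in place of~$\ind$; all the needed monotonicity and join inequalities hold for~$\cohomind$.) With this adjustment your proof goes through.
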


When the parameter $\ell$ is removed from the statement, the case $m=0$ is a far-reaching nonembeddability result known as Sarkaria's Coloring/Embedding Theorem, which can be found in Matou\v sek's book~\cite[Theorem 5.8.2]{Matousek} and is implicit in Sarkaria's papers \cite{Sarkaria90, Sarkaria91a}.  Observe that any nonembeddability result which follows from Sarkaria's Theorem also admits a chiral generalization by taking $\ell = 1$ in Theorem \ref{thm:coindex}.  We discuss this and other consequences, including the corollaries advertised above, in Section~\ref{sec:appl}.  We prove Theorem~\ref{thm:coindex} in Section~\ref{sec:proof}.

\section{Some context}

To contextualize Theorem~\ref{thm:coindex} and its corollaries, we provide brief introductions to chirality and nonsingular bilinear maps.

\subsection{Chirality} 
\label{subsec:chirality}

The study of whether a given embedding $G \hookrightarrow \R^3$ of a graph~$G$ is chiral was originally motivated by chemistry. In the original motivation $G$ represents the bonds in a molecule. See~\cite{Flapan2000} for an introduction, where for instance we can learn that ``one enantiomer of the molecule called carvone smells like caraway, whereas its mirror image smells like spearmint.'' 

In order to define chirality we need the notions of isotopy and ambient isotopy, which we recall now. A continuous map $F\colon X \times [0,1] \to \R^d, (x,t) \mapsto f_t(x)$ such that $f_t$ is an embedding for every $t$ is an \emph{isotopy} between $f_0$ and~$f_1$. Given two embedding $h, h' \colon X\hookrightarrow \R^d$, an \emph{ambient isotopy} from $h$ to $h'$ is a continuous map $F\colon \R^d \times [0,1] \to \R^d, (x,t) \mapsto f_t(x)$ such that $f_t$ is a homeomorphism for every~$t$, $f_0$ is the identity and~${f_1(h(x))= h'(x)}$ for all $x\in X$. In the smooth category any isotopy can be extended to an ambient isotopy. This fails without the smoothness requirement; for example, any knot $S^1 \hookrightarrow \R^3$ is isotopic to the unknot. 

The notion of ambient isotopy can similarly be defined for subspaces instead of embeddings: Given two subspaces $X, X' \subset \R^d$, an \emph{ambient isotopy} from $X$ to $X'$ is a continuous map ${F\colon \R^d \times [0,1] \to \R^d}$, $(x,t) \mapsto f_t(x)$ such that $f_t$ is a homeomorphism for every~$t$, $f_0$ is the identity and~${f_1(X)= X'}$. An ambient isotopy between the embeddings $h$ and $h'$ is also an ambient isotopy from $h(X)$ to~$h'(X)$, so an ambient isotopy between subspaces (the images of two given embeddings) is in general a weaker requirement than an ambient isotopy between embeddings.

Let $X \subset \R^d$, and let $h \colon \R^d \to \R^d$ be a homeomorphism which is isotopic to a reflection in a hyperplane, that is, $h$ is an orientation-reversing homeomorphism. Then $h(X)$ is a \emph{mirror image} of~$X$. All mirror images are the same up to ambient isotopy. The spatial graphs literature calls a graph $G$ (intrinsically) \emph{chiral} if given any embedding $f \colon G \hookrightarrow \R^3$ there is no ambient isotopy of the subspace $f(G) \subset \R^3$ to a mirror image of~$f(G)$. We will refer to this as \emph{unlabelled chirality} to distinguish this from the notion of chirality for embeddings.

It is not difficult to see that all embeddings $X \hookrightarrow \R^d$ of some space $X$ are chiral if and only if~${\co(\Emb_1(X,\R^d)) \le 0}$. More generally, we have the following.

\begin{lemma}
	Let $X$ be a space, and let $0 \le \ell \le d$ be integers. Then $\co(\Emb_\ell(X, \R^d)) \ge 1$ if and only if there is an isotopy between some embedding $f\colon X \hookrightarrow \R^d$ and the embedding obtained from $f$ by flipping the sign of the first $\ell$ coordinates.
\end{lemma}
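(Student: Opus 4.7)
The plan is to show the two directions by constructing a $\Z/2$-equivariant loop from an isotopy and vice versa. Parameterize $S^1$ as $[0,1]/{(0\sim 1)}$ so that the antipodal action is $t \mapsto t+\tfrac12 \pmod 1$. Then a $\Z/2$-map $\gamma \colon S^1 \to \Emb_\ell(X,\R^d)$ is simply a loop $\gamma$ satisfying $\gamma(t+\tfrac12) = \varepsilon \cdot \gamma(t)$ for all $t$, where $\varepsilon$ is the sign-flip on the first $\ell$ coordinates.

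For the forward direction, assume $\co(\Emb_\ell(X,\R^d)) \ge 1$ and let $\gamma$ be such an equivariant loop. Setting $f = \gamma(0)$, the restriction of $\gamma$ to $[0,\tfrac12]$ is a continuous path of embeddings from $f$ to $\gamma(\tfrac12) = \varepsilon \cdot f$, which is precisely an isotopy between $f$ and the embedding obtained from it by flipping the first $\ell$ coordinates.

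For the converse, suppose $F \colon X \times [0,\tfrac12] \to \R^d$, $(x,t)\mapsto f_t(x)$, is an isotopy from $f_0 = f$ to $f_{1/2} = \varepsilon \cdot f$. Define $\gamma \colon [0,1] \to \Emb_\ell(X,\R^d)$ by
\[
\gamma(t) = \begin{cases} f_t & 0 \le t \le \tfrac12, \\ \varepsilon \cdot f_{t-1/2} & \tfrac12 \le t \le 1. \end{cases}
\]
At $t = \tfrac12$ both formulas agree since $\varepsilon \cdot f_0 = \varepsilon \cdot f = f_{1/2}$, and at $t = 1$ we have $\gamma(1) = \varepsilon \cdot f_{1/2} = \varepsilon^2 \cdot f = f = \gamma(0)$, so $\gamma$ descends to a continuous map $S^1 \to \Emb_\ell(X,\R^d)$. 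Equivariance is immediate from the definition on $[0,\tfrac12]$, and on $[\tfrac12,1]$ it follows from $\varepsilon \cdot \gamma(t) = \varepsilon^2 \cdot f_{t-1/2} = f_{t-1/2} = \gamma(t-\tfrac12)$.

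There is no real obstacle here: everything amounts to the standard fact that a $\Z/2$-equivariant map out of $S^1$ is the same thing as a path in the ambient space connecting a point to its involute, together with a routine continuity check at the two gluing points $0$ and $\tfrac12$. The mild subtlety is that when the action on $\Emb_\ell(X,\R^d)$ is not free (i.e.\ some embedding is literally equal to its sign-flipped image), the equivalence still holds trivially via the constant loop at a fixed point and the constant isotopy.
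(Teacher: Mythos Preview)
Your proof is correct and follows essentially the same approach as the paper's: both arguments identify a $\Z/2$-equivariant map $S^1 \to \Emb_\ell(X,\R^d)$ with a path from an embedding to its $\varepsilon$-image, using a half-circle (your $[0,\tfrac12]$, the paper's upper semicircle) and extending by equivariance to the other half. Your additional remark on the non-free case is a harmless clarification not present in the paper.
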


\begin{proof}
	Let $\Phi\colon S^1 \to \Emb_\ell(X, \R^d)$ be a $\Z/2$-map. Denote the generator of the $\Z/2$-action on $\Emb_\ell(X, \R^d)$ by~$\varepsilon$. Fix a point $x_0 \in S^1$ and a path $\gamma \colon[0,1]\to S^1$ with $\gamma(0) = x_0$ and $\gamma(1) = -x_0$. Then $\Phi\circ \gamma$ is an isotopy from the embedding $\Phi(x_0)$ to~$\Phi(-x_0) = \varepsilon \Phi(x_0)$. Conversely, let $F\colon [0,1] \to \Emb(X, \R^d)$ be an isotopy with $F(1) = \varepsilon F(0)$. Think of $[0,1]$ as the upper semi-circle of $S^1$ and extend $F$ to a continuous map $\Phi\colon S^1 \to \Emb(X,\R^d)$ by setting $\Phi(x) = F(x)$ for $x$ in the upper semi-circle and $\Phi(x) = \varepsilon \Phi(-x)$ for $x$ in the lower semi-circle.
\end{proof}

Given a graph~$G$, the notion of unlabelled chirality for $G$ is often deduced from the (labeled) version for embeddings by showing that any homeomorphism $G \to G$ (up to isotopy) extends to an orientation-preserving homeomorphism $\R^3 \to \R^3$. For example, this is how Simon~\cite{Simon} shows that the image of a particular embedding of $K_{3,3}$ is chiral. Flapan~\cite{Flapan1989} showed that $K_{3,3}$ is chiral in the stronger, unlabelled sense.

The notion of unlabelled chirality of spatial graphs $G \subset \R^3$ is different from the stronger notion of chirality of embeddings $G \hookrightarrow \R^3$. Flapan and Weaver~\cite{Flapan1992} showed that the complete graphs $K_{4n+3}$, $n \ge 1$, are chiral as subspaces, and all other complete graphs are not chiral in the unlabelled sense, that is, for every $n \not\equiv 3 \mod 4$ there is a subspace $X$ of $\R^3$ homeomorphic to $K_n$ and an ambient isotopy to a mirror image of~$X$. On the other hand, Theorem~\ref{thm:coindex} easily implies that for any given embedding $f\colon K_n \hookrightarrow \R^3$, $n\ge 5$, there is no isotopy to a mirror image of $f$ through almost-embeddings.

In this sense our results are weaker than the chirality results for spatial graphs since they apply to (labelled) embeddings and not to (unlabelled) subspaces. There is no difference for some spaces, such as for $\R P^2$ since any homeomorphism of $\R P^2$ is the identity up to isotopy. In fact, in those cases our results may be stronger since they assert the nonexistence of an isotopy, which is stronger than the nonexistence of an ambient isotopy as we do not have any smoothness requirement.

\subsection{Nonsingular bilinear maps}
\label{sec:nonsingularhistory}
A bilinear map $B\colon \R^{p+1} \times \R^{q+1} \to \R^d$ is  \emph{nonsingular} if $B(x,y) = 0$ implies $x = 0$ or $y=0$.  The \emph{Hurwitz--Radon} function $\rho(p+1,d)$ is defined as the maximum number $q+1$ such that there exists a nonsingular bilinear map $\R^{p+1} \times \R^{q+1} \to \R^d$.  The values of $\rho(p+1) \coloneqq \rho(p+1,p+1)$ are known and may be computed as follows: decompose $p+1 =2^{b+4c}\cdot(2a+1)$, with $0 \leq b < 4$, then $\rho(p+1)$ is equal to $2^b + 8c$.  For example, $\rho(4) = 4$, and a nonsingular bilinear map $\R^4 \times \R^4 \to \R^4$ is given by quaternionic multiplication.

The Hurwitz--Radon function originally appeared in the independent works of Hurwitz~\cite{Hurwitz} and Radon~\cite{Radon} in their studies of square identities.  It has since made prominent appearances in topology.  In particular, an important result of Adams \cite{Adams} is that the inequality $q+1 \leq \rho(p+1) - 1$ holds if and only if there exist $q+1$ linearly independent tangent vector fields on~$S^{p}$.  The Hurwitz--Radon function is also related to the study of \emph{skew fibrations} of $\R^n$ by pairwise skew affine copies of~$\R^{p+1}$; see \cite{Harrison, Harrison2, Harrison3, OvsienkoTabachnikov, OvsienkoTabachnikov2}.

Nonsingular (symmetric) bilinear maps have been studied, in part, due to their relationships to immersions and embeddings of projective spaces and to totally nonparallel immersions; see~\cite{Harrison5, James4}.  Most notably, they were studied in a series of articles by K.Y.\ Lam (e.g.\  \cite{Lam1, Lam6, Lam4, Lam3, Lam2, Lam5}) and by Berger and Friedland~\cite{BergerFriedland}.

We also note that nonsingular skew-linear maps $B \colon \R^{p+1} \times S^q \to \R^d$ (here \emph{skew} refers to $\Z/2$-equivariance in the second slot) and biskew maps $S^p \times S^q \to \R^d$ have received separate attention; the former notion is equivalent to the \emph{generalized vector field problem}, and the latter appears in the study of symmetric topological complexity \cite{Gonzalez}.  Each of these notions will appear in our study of coindex.

\section{Preliminaries and related results}
\label{sec:prelim}

Here we summarize auxiliary definitions and results that we will need in subsequent sections.

\subsection{Nonsingular bilinear and biskew maps}
\label{sec:nonsingularprelim}

We collect some simple existence results for nonsingular bilinear maps.

\begin{lemma}
\label{lem:bilinear-exist}
Nonsingular bilinear maps exist in the following dimensions:
\begin{compactenum}[(a)]
\item $\R \times \R^k \to \R^k$,
\item $\R^2 \times \R^{2k} \to \R^{2k}$,
\item $\R^4 \times \R^{4k} \to \R^{4k}$,
\item $\R^8 \times \R^{8k} \to \R^{8k}$,
\item $\R^{p+1} \times \R^{q+1} \to \R^{p+q+1}$,
\item $\R^{p+1} \times \R^{q+1} \to \R^{p+q}$, \ when $p$ and $q$ are both odd,
\item $\R^9 \times \R^{16} \to \R^{16}$.
\end{compactenum}
\end{lemma}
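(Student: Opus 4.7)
The plan is to prove each case by exhibiting an explicit nonsingular bilinear map, with the seven cases grouping naturally by the underlying algebraic structure used.

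For cases (a)--(d), I will use the four normed division algebras $\mathbb{K} \in \{\R, \C, \mathbb{H}, \mathbb{O}\}$ of real dimensions $n \in \{1,2,4,8\}$. Identifying $\R^n \cong \mathbb{K}$ and $\R^{nk} \cong \mathbb{K}^k$, I will set
\[
B\bigl(a, (v_1, \ldots, v_k)\bigr) = (av_1, \ldots, av_k).
\]
This is bilinear, and nonsingular because $\mathbb{K}$ has no zero divisors: if $a \neq 0$ and $av_i = 0$ for each $i$, then each $v_i = 0$. No associativity is required, so the octonion case (d) goes through verbatim, since each slot involves only a single product of two octonions.

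For cases (e) and (f), I will use polynomial multiplication. In (e), identify $\R^{p+1}$ and $\R^{q+1}$ with the spaces of real polynomials of degree $\leq p$ and $\leq q$ respectively, so that their product has degree $\leq p+q$ and lies in $\R^{p+q+1}$; nonsingularity follows because $\R[t]$ is an integral domain. For (f), when $p$ and $q$ are both odd, both $p+1$ and $q+1$ are even, and I identify $\R^{p+1} \cong \C^{(p+1)/2}$ and $\R^{q+1} \cong \C^{(q+1)/2}$. Complex polynomial multiplication then lands in $\C^{(p+1)/2 + (q+1)/2 - 1} = \C^{(p+q)/2} \cong \R^{p+q}$, with nonsingularity again coming from $\C[t]$ being an integral domain.

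Case (g) realizes the Hurwitz--Radon bound $\rho(16) = 9$ and is the main obstacle of the lemma, since it is the only part that does not reduce to a direct zero-divisor argument. The plan is to produce eight real $16 \times 16$ matrices $A_1, \ldots, A_8$ with $A_i^2 = -I$ and $A_iA_j + A_jA_i = 0$ for $i \neq j$; such matrices exist because the Clifford algebra $\mathrm{Cl}_{0,8}$ admits a $16$-dimensional irreducible real representation (equivalently, they can be realized explicitly by left multiplications by the imaginary octonions acting on $\mathbb{O} \oplus \mathbb{O} \cong \R^{16}$). Then
\[
B\bigl((a_0, a_1, \ldots, a_8), v\bigr) = \Bigl(a_0 I + \sum_{i=1}^{8} a_i A_i\Bigr) v
\]
is bilinear, and the anticommutation relations give $\bigl(a_0 I + \sum_i a_i A_i\bigr)\bigl(a_0 I - \sum_i a_i A_i\bigr) = \|a\|^2 I$, so the matrix factor is invertible whenever $a \neq 0$, yielding nonsingularity. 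Constructing this Clifford module (or its octonionic model) is the only place where more than elementary algebra is needed.
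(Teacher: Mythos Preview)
Your proof is correct and follows essentially the same route as the paper: division-algebra multiplication for (a)--(d), real and complex polynomial multiplication for (e)--(f), and the Hurwitz--Radon value $\rho(16)=9$ for (g). The only difference is that the paper simply cites $\rho(16)=9$ for (g), whereas you unpack it via a Clifford-module construction; your anticommutation argument is fine, though the parenthetical about realizing the $A_i$ as ``left multiplications by the imaginary octonions on $\mathbb{O}\oplus\mathbb{O}$'' needs a small tweak (diagonal $L_{e_i}$'s commute with the obvious off-diagonal $J$; one must twist signs, e.g.\ use $\mathrm{diag}(L_{e_i},-L_{e_i})$, to get anticommutation).
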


\begin{proof} The nonsingular bilinear maps in items (a)--(d) are induced by real, complex, quaternionic, and octonionic multiplication.  For item (e), treat $\R^{p+1}$ and $\R^{q+1}$ as the coefficients of degree-$p$ and degree-$q$ real polynomials.  Then polynomial multiplication is a nonsingular bilinear map into $\R^{p+q+1}$, the space of degree-$(p+q)$ real polynomials.  When $p$ and $q$ are odd, the similarly defined complex polynomial multiplication may be used for item~(f).  Item (g) follows from the known value $\rho(16) = 9$, which may be computed by the definition of the Hurwitz-Radon function $\rho$ given in Section \ref{sec:nonsingularhistory}.
\end{proof}

Suppose there exists a nonsingular bilinear map $\R^{p+1} \times \R^{q+1} \to \R^{p+q}$ (compare with Lemma \ref{lem:bilinear-exist}(e) and (f)).  Then there exists a biskew map $S^p \times S^q \to S^{p+q-1}$, which induces a map of real projective spaces.  By studying the induced map on cohomology in $\Z/2$-coefficients, Hopf~\cite{Hopf} showed that such a map can only exist if $p$ and $q$ both have a common one in some digit of their binary expansions.  The following more general statement has a nearly identical proof, though we present it in modern language.

The \emph{Stiefel-Whitney height} $\cohomind(X)$ of a $\Z/2$-space $X$ is the largest number $q$ such that the $q^{th}$ power of the first Stiefel-Whitney class is nonzero: $\omega_1(X)^q \neq 0$; here the multiplication is the cup product in the cohomology ring $H^*(X / \Z/2 ; \Z/2)$.  The Stiefel-Whitney height is also (and perhaps more commonly) known as the \emph{cohomological index}, but we use the former term here to avoid confusion with the coindex.  We refer to Chapter 5.3 of \cite{Matousek} for basic properties of the Stiefel-Whitney height.

\begin{proposition}
\label{prop:cohomind}
Let $X_1,\dots,X_k$ be free $\Z/2$-spaces with Stiefel-Whitney heights $i_1,\dots,i_k$.  If no two of the numbers $i_1,\dots, i_k$ share a one in any digit of their binary expansions, then every $(\Z/2)^k$-equivariant map
\[
f \colon X_1 \times \cdots \times X_k \to \R^{i_1+\cdots+i_k}
\]
has a zero.
\end{proposition}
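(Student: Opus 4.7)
The approach is to argue by contradiction: assume a $(\Z/2)^k$-equivariant map $f \colon X_1 \times \cdots \times X_k \to \R^n \setminus \{0\}$ exists, where $n = i_1 + \cdots + i_k$, and derive a cohomological contradiction in the Borel construction over $G = (\Z/2)^k$.

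First I would observe that each generator of $G$ acts on $\R^n$ by $-1$, so the $G$-action on $\R^n$ factors through the sum homomorphism $\sigma \colon G \to \Z/2$. Identifying $H^*(BG; \Z/2) = \Z/2[\omega_1, \ldots, \omega_k]$ with the coordinate generators, the associated bundle $EG \times_G \R^n \to BG$ is then $n$ copies of the line bundle $L_\sigma$ classified by $\sigma$, and $\omega_1(L_\sigma) = \omega_1 + \cdots + \omega_k$. Writing $Y = X_1 \times \cdots \times X_k$ and $\pi \colon Y_{hG} = EG \times_G Y \to BG$, the map $f$ (which avoids $0$) induces a nowhere-zero section of the pulled-back bundle $\pi^*(nL_\sigma)$ over $Y_{hG}$. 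The top Stiefel--Whitney class of this pullback must therefore vanish:
$$
\pi^*\bigl((\omega_1 + \cdots + \omega_k)^n\bigr) = 0 \quad \text{in } H^*(Y_{hG}; \Z/2).
$$

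Because $G$ acts coordinatewise, Künneth gives $H^*(Y_{hG}; \Z/2) \cong H^*_{\Z/2}(X_1; \Z/2) \otimes \cdots \otimes H^*_{\Z/2}(X_k; \Z/2)$, and under this identification $\pi^*(\omega_j)$ is the first Stiefel--Whitney class of $X_j$, which by the definition of Stiefel--Whitney height satisfies $\omega_j^{i_j} \neq 0$ and $\omega_j^{i_j + 1} = 0$. Expanding $(\omega_1 + \cdots + \omega_k)^n$ via the multinomial theorem modulo $2$ and applying $\omega_j^{i_j + 1} = 0$, the constraint $\sum j_\ell = n = \sum i_\ell$ forces every surviving term to have $j_\ell = i_\ell$. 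By the classical Kummer/Lucas criterion the multinomial coefficient $\binom{n}{i_1, \ldots, i_k}$ is odd precisely when no two of the $i_\ell$ share a common $1$ in binary --- which is exactly our hypothesis. Hence $(\omega_1 + \cdots + \omega_k)^n = \omega_1^{i_1} \cdots \omega_k^{i_k}$, and this product is nonzero by Künneth. This contradicts the displayed vanishing.

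The main obstacle is the bundle-theoretic setup: one must verify cleanly that $EG \times_G \R^n$ is $nL_\sigma$ with the claimed first Stiefel--Whitney class, and justify that a nowhere-zero section of the pullback forces the top $\omega$-class to pull back to zero. Once this translation into equivariant cohomology is in hand, the remainder is a routine Künneth identification together with the binary-arithmetic observation about multinomial coefficients modulo $2$.
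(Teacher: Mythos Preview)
Your proof is correct and follows essentially the same approach as the paper: both arrive at the vanishing of $(\omega_1(X_1) + \cdots + \omega_1(X_k))^n$ in $H^*(\prod_j X_j/(\Z/2); \Z/2)$ and then conclude via the multinomial/Lucas argument. The only difference is framing: the paper normalizes $f$ to land in $S^{n-1}$, passes directly to the quotient map into $\R P^{n-1}$, and pulls back the relation $\omega_1(\R P^{n-1})^n = 0$, whereas you package this same computation in the language of the Borel construction and the top Stiefel--Whitney class of the bundle $nL_\sigma$ --- since all actions are free, these are equivalent descriptions.
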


\begin{proof} Let $n = i_1 + \cdots + i_k$.  We show the contrapositive.  If $f$ avoids zero, then by normalization, $f$ maps $(\Z/2)^k$-equivariantly into $S^{n-1}$, hence induces a map on quotients. The quotient map induces a map in cohomology with $\Z/2$-coefficients:
\[
H^*(\R P^{n-1}) \to H^*(X_1/\Z/2 \times \cdots \times X_k/\Z/2) \simeq H^*(X_1/\Z/2) \otimes \cdots \otimes H^*(X_k/\Z/2),
\]
which maps
\[
\omega_1(\R P^{n-1}) \mapsto \omega_1(X_1) \otimes 1 \otimes \cdots \otimes 1  + \cdots + 1 \otimes \cdots \otimes 1 \otimes \omega_1(X_k).
\]
Since $0 = \omega_1(\R P^{n-1})^n$, its image is zero in $H^*(X_1/\Z/2 \times \cdots \times X_k/\Z/2)$:
\[
0 = (\omega_1(X_1) + \cdots + \omega_1(X_k))^n.
\]
Therefore
\[
0 = {{n}\choose{i_1,\dots,i_k}} \omega_1(X_1)^{i_1}\cdots\omega_1(X_k)^{i_k},
\]
which, by the condition on the Stiefel-Whitney heights, implies that the multinomial coefficient ${n}\choose{i_1,\dots,i_k}$ is even.  By a standard generalization of the Lucas theorem \cite{Lucas}, this occurs if and only if two of the $i_1,\dots, i_k$ share a one in any slot of their binary expansions.
\end{proof}

We also require a similar statement which takes into account different possible $\Z/2$-actions in the codomain.  Let $V_{--}, V_{+-}$, or $V_{-+}$ denote $\R$ as a $(\Z/2)^2$-module, where the subscript indicates the action of the two standard generators, that is, $V_{+-}$ indicates that the first generator acts trivially, while the second generator acts by $x \mapsto -x$. The following lemma follows easily from Ramos~\cite{Ramos}. Another short proof using mapping degrees can be found in~\cite{FrickEtal}.

\begin{lemma}
\label{lem:product-of-spheres}
	Let $k,\ell,m,p$, and $q$ be nonnegative integers with $p+q  = k + \ell +m$, $p \ge \ell$, and $q \ge m$. Suppose that $p-\ell$ and $q-m$ do not share a one in any digit of their binary expansions. Then any $(\Z/2)^2$-equivariant map $S^p \times S^q \to V_{--}^k \times V_{-+}^\ell \times V_{+-}^m$ has a zero.
\end{lemma}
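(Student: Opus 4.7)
The plan is to adapt the Stiefel--Whitney class Euler obstruction used in Proposition~\ref{prop:cohomind} to the product action and the mixed coefficient representation. I would begin by supposing for contradiction that a $(\Z/2)^2$-equivariant map $f\colon S^p \times S^q \to V_{--}^k \times V_{-+}^\ell \times V_{+-}^m$ avoids the origin. Because the $(\Z/2)^2$-action on $S^p \times S^q$ is free with quotient $\R P^p \times \R P^q$, applying the Borel construction turns $f$ into a nowhere-zero section of the vector bundle $E \to \R P^p \times \R P^q$ associated to the $(\Z/2)^2$-representation in the codomain. I will work throughout with
\[
H^*(\R P^p \times \R P^q; \Z/2) = \Z/2[\alpha, \beta] \,/\, (\alpha^{p+1}, \beta^{q+1}),
\]
where $\alpha,\beta$ are pulled back from each projective factor.

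The next step is to decompose $E$ into line bundles and read off their first Stiefel--Whitney classes. Writing $\gamma_p, \gamma_q$ for the tautological line bundles and $\pi_1, \pi_2$ for the projections, the summand $V_{-+}$ descends to $\pi_1^*\gamma_p$ with $w_1 = \alpha$, the summand $V_{+-}$ to $\pi_2^*\gamma_q$ with $w_1 = \beta$, and $V_{--}$ to $\pi_1^*\gamma_p \otimes \pi_2^*\gamma_q$ with $w_1 = \alpha + \beta$. A nowhere-zero section of $E$ forces its top Stiefel--Whitney class (the mod~$2$ Euler class) to vanish, so
\[
(\alpha+\beta)^k \, \alpha^\ell \, \beta^m = 0 \quad \text{in } H^{p+q}(\R P^p \times \R P^q; \Z/2).
\]
Expanding by the binomial theorem, the term $\binom{k}{j}\alpha^{k-j+\ell}\beta^{j+m}$ survives in the quotient ring only when $k+\ell-p \le j \le q-m$; the hypothesis $p+q = k+\ell+m$ forces $k+\ell-p = q-m$, so the sum collapses to the single term $\binom{k}{q-m}\,\alpha^p\beta^q$. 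Writing $k = (p-\ell) + (q-m)$, Kummer's theorem (as invoked in Proposition~\ref{prop:cohomind}) says this coefficient is odd precisely when $p-\ell$ and $q-m$ share no common one in their binary expansions. Under the hypothesis the coefficient is therefore odd and the class is nonzero, contradicting the vanishing.

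The step requiring the most care is the correct identification of each summand $V_{\pm\pm}$ with a line bundle on the Borel quotient and the calculation of its first Stiefel--Whitney class as one of $\alpha + \beta, \alpha, \beta$; once this dictionary is in place, the computation is forced by the dimension identity $p+q = k+\ell+m$ and the Kummer/Lucas characterization of odd binomial coefficients modulo~$2$.
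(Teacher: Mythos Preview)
Your argument is correct. The identification of the three one-dimensional representations with the line bundles $\pi_1^*\gamma_p\otimes\pi_2^*\gamma_q$, $\pi_1^*\gamma_p$, and $\pi_2^*\gamma_q$ is right, the top Stiefel--Whitney class computation is clean, and the reduction to a single binomial coefficient via the dimension constraint $p+q=k+\ell+m$ is exactly what one expects; the hypotheses $p\ge\ell$ and $q\ge m$ are used implicitly to ensure $0\le q-m\le k$, so that $\binom{k}{q-m}$ is the relevant coefficient.

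The paper itself does not give a proof but only cites Ramos and a mapping-degree argument from~\cite{FrickEtal}. Your route is genuinely different from those: rather than computing a $(\Z/2)^2$-equivariant degree, you pass to the quotient $\R P^p\times\R P^q$ and use the vanishing of the mod~$2$ Euler class of the associated bundle. This is closer in spirit to the proof of Proposition~\ref{prop:cohomind}, extended to handle the three distinct sign representations. The advantage of your approach is that it is entirely self-contained within the characteristic-class framework already set up in the paper and makes the role of the binary-expansion hypothesis transparent via Kummer's theorem; the degree argument, by contrast, yields slightly more (an integral, not just mod~$2$, obstruction) but requires a separate toolkit.
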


\subsection{Joins and deleted joins}
Recall that for topological spaces $X$ and $Y$ the \emph{join} is the space $X * Y$ obtained from $X \times Y \times [0,1]$ by taking the quotient with respect to the equivalence relation generated by $(x,y,0) \sim (x',y,0)$ for $x,x' \in X$ and $(x,y,1) \sim (x,y',1)$ for $y,y' \in Y$. If the simplicial complexes $\Sigma_X$ and $\Sigma_Y$ triangulate $X$ and~$Y$, respectively, then the join $X * Y$ is naturally triangulated by the \emph{join} of simplicial complexes~$\Sigma_X * \Sigma_Y$. As abstract simplicial complexes 
$$\Sigma_X * \Sigma_Y = \{\sigma \times \{1\} \cup \tau \times \{2\} \ : \ \sigma \in \Sigma_X, \ \tau \in \Sigma_Y\},$$
that is, the join is defined by the rule that the vertices of a face in $\Sigma_X$ and the vertices of a face in $\Sigma_Y$ together span a face in~$\Sigma_X * \Sigma_Y$. Here we assume that $\Sigma_X$ and $\Sigma_Y$ have disjoint vertex sets. 

One may think of the join $X * Y$ as abstract convex combinations of points in $X$ and in~$Y$. We will write $\lambda_1x+\lambda_2y$, with $\lambda_1, \lambda_2 \ge 0$ and $\lambda_1 + \lambda_2 = 1$, for the point $(x,y,\lambda_1)$ in~$X * Y$. Thus for $\lambda_1 = 0$ the point $x$ does not influence the point in~$X * Y$, whereas for $\lambda_1 = 1$ the choice of~$y$ does not matter. Note that in this notation $\lambda_1x_1+\lambda_2x_2$ and $\lambda_2x_2+\lambda_1x_1$ determine different points in $X * X$ if $x_1 \ne x_2$.

The \emph{deleted join} $X^{*2}_\Delta$ of a space $X$ is obtained from $X * X$ by deleting all points of the form $(x,x,t)$ with $x \in X$ and $t \in (0,1)$. The \emph{deleted join} $\Sigma^{*2}_\Delta$ of a simplicial complex $\Sigma$ is obtained from the join $\Sigma * \Sigma$ by deleting all those faces that have a vertex in $\Sigma$ in common, that is,
$$\Sigma^{*2}_\Delta = \{\sigma \times \{1\} \cup \tau \times \{2\} \ : \ \sigma, \tau \in \Sigma, \ \sigma \cap \tau = \emptyset\}.$$
In particular the deleted join $(\Delta_{n})^{*2}_\Delta$ of the $n$-simplex $\Delta_n$ is the boundary of an $(n+1)$-dimensional crosspolytope, and thus $(\Delta_{n})^{*2}_\Delta$ is homeomorphic to~$S^n$. Interchanging the join factors $\lambda_1x_1 + \lambda_2x_2 \mapsto \lambda_2x_2 + \lambda_1x_1$ is the antipodal action on the sphere~$(\Delta_{n})^{*2}_\Delta$.

\section{Proof of the main result}
\label{sec:proof}

Here we prove Theorem~\ref{thm:coindex}.  We first state and prove a more general result, Theorem~\ref{thm:index} below, and then derive Theorem~\ref{thm:coindex} as a simple corollary.

\begin{theorem}
\label{thm:index}
	Let $\Sigma$ be a simplicial complex on ground set~$[n]$. Let $c$, $d$, and $\ell$ be nonnegative integers such that $m = d-n+c+2 \le \ell$. Suppose there is a map $\Psi\colon (\Delta_{n-1})^{*2}_\Delta \to \R^c$ with ${\Psi(\lambda_1x_1 + \lambda_2x_2)} = -\Psi(\lambda_2x_2 + \lambda_1x_1)$ such that $\Psi(\lambda_1x_1 + \lambda_2x_2) = 0$ implies $\lambda_1 = \frac12 = \lambda_2$ and ${x_1, x_2 \in \Sigma}$. If the integers $m$ and $\ell-m$ do not share ones in their binary expansions, then $\AEmb_\ell(\Sigma, \R^d)$, and thus $\Emb_\ell(\Sigma, \R^d)$, has coindex at most~${m-1}$.
\end{theorem}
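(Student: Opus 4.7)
The approach is to argue by contradiction: assume a $\Z/2$-equivariant map $\Phi \colon S^m \to \AEmb_\ell(\Sigma, \R^d)$ exists, and combine $\Phi$ with $\Psi$ via a van Kampen--Flores style construction to produce a zero-avoiding $(\Z/2)^2$-equivariant map $S^m \times S^{n-1} \to W$ for a real $(\Z/2)^2$-representation $W$ whose structure makes Lemma~\ref{lem:product-of-spheres} apply. That lemma's dimension and binary-digit hypotheses are engineered to line up exactly with the hypotheses on $m$ and $\ell-m$ in Theorem~\ref{thm:index}, producing the contradiction.

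\textbf{The map.} Write $f_s := \Phi(s)$ and let $\bar f_s \colon \Delta_{n-1} \to \R^d$ be a continuous extension of $f_s$, obtained by extending affinely over any simplex of $\Delta_{n-1}$ not in $\Sigma$; this extension varies continuously in $s$ and still satisfies $\bar f_{-s} = \varepsilon \circ \bar f_s$. Identifying $(\Delta_{n-1})^{*2}_\Delta$ with $S^{n-1}$ and writing a generic point as $v = \lambda_1 x_1 + \lambda_2 x_2$, define
\[
H \colon S^m \times S^{n-1} \longrightarrow \R^d \oplus \R \oplus \R^c, \qquad H(s,v) = \bigl(\lambda_1 \bar f_s(x_1) - \lambda_2 \bar f_s(x_2),\ \lambda_1 - \lambda_2,\ \Psi(v)\bigr).
\]
For $H(s,v)$ to vanish, the last block would require $\Psi(v) = 0$, hence by hypothesis $\lambda_1 = \lambda_2 = \tfrac12$ and $x_1, x_2 \in \Sigma$; then $\bar f_s = f_s$ at both points, the middle block $\lambda_1 - \lambda_2$ vanishes automatically, and the first block is proportional to $f_s(x_1) - f_s(x_2)$. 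Since $x_1, x_2$ lie in vertex-disjoint faces of $\Sigma$, the almost-embedding property forces $f_s(x_1) \neq f_s(x_2)$, so the first block is nonzero. Hence $H$ never vanishes.

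\textbf{Equivariance and application.} The antipode on $S^m$ acts via $\varepsilon$, so it flips only the first $\ell$ coordinates of the van Kampen block and leaves the middle coordinate and $\Psi(v)$ fixed; the join-swap on $S^{n-1}$ negates the entire van Kampen block, the middle coordinate, and $\Psi(v)$ (using the antipodal hypothesis on $\Psi$). Thus $H$ is $(\Z/2)^2$-equivariant into the representation $V_{--}^{\ell} \oplus V_{+-}^{\,d-\ell+1+c}$, where the first subscript records the $S^m$-antipode. Now apply Lemma~\ref{lem:product-of-spheres} with sphere dimensions $p = m$, $q = n-1$, with $\ell$ copies of $V_{--}$, no $V_{-+}$ factors, and $d-\ell+1+c$ copies of $V_{+-}$: the dimension identity $p+q = \ell + (d-\ell+1+c)$ rearranges to $m = d-n+c+2$; the inequality $q \ge$ (number of $V_{+-}$ factors) reduces to $\ell \ge m$; and the binary-digit requirement on $p-0 = m$ and $q - (d-\ell+1+c) = \ell-m$ is precisely the Theorem's hypothesis. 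The lemma forces $H$ to have a zero, contradicting the previous paragraph.

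\textbf{Where the difficulty lies.} No single step is deep once Lemma~\ref{lem:product-of-spheres} is in hand; the real work is organizing the two commuting $\Z/2$-actions on the codomain so that the $\ell$-parameter is absorbed into the $V_{--}$ versus $V_{+-}$ split, which is exactly what the binary-digit hypothesis of the lemma exploits. The conceptual role of $\Psi$ is to bridge the gap between $\Sigma^{*2}_\Delta$ (where the classical van Kampen obstruction map is automatically nonzero on almost-embeddings) and the ambient sphere $(\Delta_{n-1})^{*2}_\Delta$ on which we need a domain $S^{n-1}$ to feed into the lemma.
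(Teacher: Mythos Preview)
Your proof is correct and follows essentially the same approach as the paper: both combine an equivariant extension of the parametrized almost-embedding to all of $\Delta_{n-1}$ with the test map $\Psi$ to produce a $(\Z/2)^2$-equivariant map from $S^m \times (\Delta_{n-1})^{*2}_\Delta \cong S^m \times S^{n-1}$ into a representation of total dimension $d+c+1$, and then invoke Lemma~\ref{lem:product-of-spheres} with exactly the same parameter matching. Your explicit inclusion of the $\lambda_1-\lambda_2$ coordinate (redundant for the zero-avoidance argument but needed for the dimension count) makes transparent what the paper encodes in writing the codomain as $\R^{c+1}\times\R^d$; the only cosmetic difference is that you and the paper swap the roles of $p$ and $q$ (and hence $V_{-+}$ versus $V_{+-}$) when applying the lemma.
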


\begin{proof}
	Let $F\colon \Sigma \times S^m \to \R^d$ be a map that is equivariant in the second factor, that is, $F(x,-y) = \varepsilon\cdot F(x,y)$, where $\varepsilon$ acts on $\R^d$ by multiplication by $-1$ in exactly $\ell$ coordinates. We have to show that for some $y\in S^m$ the map $F|_{\Sigma \times \{y\}}$ is not an embedding. 
	
	Extend $F$ continuously to $\Delta_{n-1} \times S^m$ such that the resulting map is antipodal in the second factor. 
	For example, consider the barycentric subdivision of the simplex, and define this extension of $F$ by mapping every barycenter of a face that is not contained in~$\Sigma$ to the origin. Extend this linearly onto the faces of the barycentric subdivision.
	Define $\Phi\colon (\Delta_{n-1})^{*2}_\Delta \times S^m \to \R^{c+1} \times \R^d$ by
	$$\Phi(\lambda_1x_1+\lambda_2x_2, y) = (\Psi(\lambda_1x_1+\lambda_2x_2), \lambda_1F(x_1,y) - \lambda_2F(x_2,y)).$$
    	The domain $(\Delta_{n-1})^{*2}_\Delta \times S^m$ has a free $(\Z/2)^2$-action: One generator $\varepsilon$ acts antipodally on the $S^m$-factor and trivially on $(\Delta_{n-1})^{*2}_\Delta$, while the other generator $\delta$ acts trivially on $S^m$ and by $\delta\cdot (\lambda_1x_1+\lambda_2x_2) = \lambda_2x_2+\lambda_1x_1$ on~$(\Delta_{n-1})^{*2}_\Delta$. In the codomain let $\varepsilon$ act trivially on the first factor of~$\R^{c+1} \times \R^d$, and let $\delta$ acts by multiplication by $-1$ on both factors of~$\R^{c+1} \times \R^d$. Then the map $\Phi$ is $(\Z/2)^2$-equivariant. The generator $\varepsilon$ acts trivially on $c+1+(d-\ell)$ coordinates of $\R^{c+1} \times \R^d$. By Lemma~\ref{lem:product-of-spheres} the map $\Phi$ has a zero, since $m$ and $n-2-c-d+\ell = \ell -m$ do not share any ones in any digit of their binary expansions.
		
	Thus $\Phi$ maps some point $(\lambda_1x_1+\lambda_2x_2, y)$ to zero. Then since $\Psi(\lambda_1x_1+\lambda_2x_2) = 0$ we have that $\lambda_1 = \lambda_2 = \frac12$ and that $x_1,x_2 \in \Sigma$. The last $d$ components of $\Phi$ then imply that $F(\cdot, y)$ cannot be an embedding.
\end{proof}

\begin{lemma}
\label{lem:Kneser-map}
	Let $\Sigma$ be a simplicial complex on ground set~$[n]$, and let $c = \chi(\KG(\Sigma))$. Then there is a map $\Psi\colon (\Delta_{n-1})^{*2}_\Delta \to \R^c$ with $\Psi(\lambda_1x_1 + \lambda_2x_2) = -\Psi(\lambda_2x_2 + \lambda_1x_1)$ such that $\Psi(\lambda_1x_1 + \lambda_2x_2) = 0$ implies $\lambda_1 = \frac12 = \lambda_2$ and $x_1, x_2 \in \Sigma$.
\end{lemma}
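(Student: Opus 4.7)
My plan is to build $\Psi$ explicitly from a proper $c$-coloring $\kappa\colon \{\text{nonfaces of }\Sigma\}\to [c]$ of $\KG(\Sigma)$, coordinate by coordinate. Partition the nonfaces into color classes $\mathcal N_1,\dots,\mathcal N_c$; the defining property of a proper coloring of $\KG(\Sigma)$ is that any two nonfaces in the same $\mathcal N_i$ must share at least one vertex.

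For each color $i$ I introduce a continuous nonnegative ``detector'' $\phi_i\colon \Delta_{n-1}\to \R_{\ge 0}$ that vanishes if and only if $\mathrm{supp}(\cdot)$ contains no nonface of color $i$; a concrete choice is
\[
\phi_i(x) = \sum_{\sigma \in \mathcal N_i}\ \min_{v\in \sigma} x_v.
\]
Then I would set
\[
\Psi_i(\lambda_1 x_1+\lambda_2 x_2) \;=\; \lambda_1 \phi_i(x_1) - \lambda_2 \phi_i(x_2) \;+\; C_i(\lambda_1 - \lambda_2),
\]
for positive constants $C_i$ to be chosen, and let $\Psi=(\Psi_1,\dots,\Psi_c)$. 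Each summand flips sign under the swap $\lambda_1 x_1+\lambda_2 x_2 \mapsto \lambda_2 x_2+\lambda_1 x_1$, so $\Psi$ is antipodal.

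The analysis of the zero set uses one key combinatorial input: in the deleted join $\mathrm{supp}(x_1)\cap \mathrm{supp}(x_2)=\emptyset$, so if $\sigma\subseteq \mathrm{supp}(x_1)$ and $\sigma'\subseteq \mathrm{supp}(x_2)$ both lie in $\mathcal N_i$, then $\sigma$ and $\sigma'$ would be disjoint same-color nonfaces, violating the properness of $\kappa$. Hence for every $i$ at most one of $\phi_i(x_1),\phi_i(x_2)$ is positive. If $\phi_i(x_1)=\phi_i(x_2)=0$ for \emph{every} $i$, the equation $\Psi_i=0$ collapses to $C_i(\lambda_1-\lambda_2)=0$, giving $\lambda_1=\lambda_2=1/2$ and $\mathrm{supp}(x_1),\mathrm{supp}(x_2)\in \Sigma$, which is exactly the conclusion. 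What remains is to rule out the ``mixed'' cases where some $\phi_i$ is positive on one of the two sides: the sign of $\lambda_1\phi_i(x_1)$ versus $-\lambda_2\phi_i(x_2)$ constrains $\lambda_1-\lambda_2$ to have a definite sign, and one plays this against the colors $j$ where the detectors vanish, which would instead force $\lambda_1=\lambda_2$.

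The hard part is precisely this last step: one must choose the $C_i$ (or adjust the $\phi_i$) so that such one-sided zeros cannot be simultaneously consistent across all $c$ colors, using only $c$ coordinates rather than the naive $c+1$ one would get by dedicating a separate coordinate to $\lambda_1-\lambda_2$. This is the sharp point where $c=\chi(\KG(\Sigma))$ must be exactly the dimension of the target, and I expect the correct construction to mirror the Sarkaria coloring/embedding argument, possibly by replacing the linear combination above with a more carefully tuned pairing between the detectors and the mass-balance term (for instance, scaling $\phi_i$ using the size $|\sigma|$ of the nonfaces of color $i$, or by a small generic perturbation of the $C_i$'s so that no common scalar ratio $\phi_i/C_i$ can be attained for all $i$ simultaneously on the positive-$\phi_i$ locus).
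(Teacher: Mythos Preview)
Your construction is essentially the paper's own proof, modulo a typo in the printed statement: the target should be $\R^{c+1}$, not~$\R^c$. The paper's proof of this lemma explicitly defines
\[
\Psi(\lambda_1x_1+\lambda_2x_2) = \bigl(\lambda_1-\lambda_2,\ \lambda_1\,\mathrm{dist}(x_1,\Sigma_1)-\lambda_2\,\mathrm{dist}(x_2,\Sigma_1),\ \dots,\ \lambda_1\,\mathrm{dist}(x_1,\Sigma_c)-\lambda_2\,\mathrm{dist}(x_2,\Sigma_c)\bigr)\in\R^{c+1},
\]
where $\Sigma_j$ is the complex on $[n]$ whose missing faces are precisely the color-$j$ nonfaces; and the proof of Theorem~\ref{thm:index} uses the codomain $\R^{c+1}\times\R^d$ for the test map~$\Phi$ accordingly (the dimension count there, with $m=d-n+c+2$, only balances with~$c+1$).

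With the corrected target $\R^{c+1}$, your proposal is already complete and coincides with the paper's argument: your detectors $\phi_i$ have the same zero set as $\mathrm{dist}(\cdot,\Sigma_i)$, and taking all $C_i=0$ together with one extra coordinate $\lambda_1-\lambda_2$ reproduces the paper's map. The zero-set analysis you wrote for the case ``$\phi_i(x_1)=\phi_i(x_2)=0$ for every~$i$'' is then the entire proof.

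The ``hard part'' you identify---compressing to $c$ coordinates---is a red herring caused by the typo, and is in fact impossible in general. For $c=0$ (i.e.\ $\Sigma=\Delta_{n-1}$) the statement with target $\R^0$ would force every point of the deleted join to satisfy $\lambda_1=\lambda_2$, which is false. Less degenerately, take $\Sigma=\Delta_4^{(1)}=K_5$, where $c=1$: any odd continuous map $(\Delta_4)^{*2}_\Delta\cong S^4\to\R$ has a zero set that separates $S^4$ and hence has dimension at least~$3$, whereas the locus $\{\lambda_1=\tfrac12=\lambda_2,\ x_1,x_2\in\Sigma\}$ is at most $2$-dimensional. So no choice of constants $C_i$ or perturbation of the $\phi_i$ can make the $\R^c$ version hold.
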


\begin{proof}
	Color the missing faces of $\Sigma$ by $\{1, 2, \dots, c\}$ in such a way that disjoint missing faces receive distinct colors. Let $\Sigma_j$ be the simplicial complex on ground set $[n]$ whose missing faces are the missing faces of $\Sigma$ colored~$j$. Then $\Sigma = \Sigma_1 \cap \dots \cap \Sigma_c$.
	
	Define the map $\Psi\colon (\Delta_{n-1})^{*2}_\Delta \to \R^{c+1}$ by $$\Psi(\lambda_1x_1+\lambda_2x_2) = (\lambda_1-\lambda_2, \lambda_1\mathrm{dist}(x_1,\Sigma_1)-\lambda_2\mathrm{dist}(x_2, \Sigma_1), \dots, \lambda_1\mathrm{dist}(x_1,\Sigma_c)-\lambda_2\mathrm{dist}(x_2, \Sigma_c)).$$
	
	Observe that $\Psi(\lambda_1x_1+\lambda_2x_2) = 0$ implies $\lambda_1 = \lambda_2 = \frac12$ and thus $\mathrm{dist}(x_1,\Sigma_j) = \mathrm{dist}(x_2,\Sigma_j)$ for all $j \in [c]$. Since by definition of $(\Delta_{n-1})^{*2}_\Delta$ the points $x_1$ and $x_2$ are in disjoint faces of~$\Delta_{n-1}$ and the missing faces of $\Sigma_j$ intersect pairwise, for every $j$ either $x_1 \in \Sigma_j$ or $x_2 \in \Sigma_j$. This implies $\mathrm{dist}(x_1,\Sigma_j) = \mathrm{dist}(x_2,\Sigma_j) =0$ and thus $x_1, x_2 \in \Sigma$.
\end{proof}

\begin{proof}[Proof of Theorem~\ref{thm:coindex}]
	Combine Theorem~\ref{thm:index} and Lemma~\ref{lem:Kneser-map}.
\end{proof}

\section{Coindex via nonsingular maps}
\label{sec:nonsingular}

Here we collect bounds on $\co(\Emb_d(M,\R^d))$ for topological spaces $M$, using the preliminary results stated in Section \ref{sec:nonsingularprelim}.  Combining these bounds with Theorem \ref{thm:coindex} will yield proofs for several of the corollaries stated in the introduction.

Recall that a nonsingular skew-linear map $\R^{p+1} \times S^q \to \R^d$ induces a $\Z/2$-equivariant map $S^q \to \Emb_d(\R^{p+1},\R^d)$, hence $\co(\Emb_d(\R^{p+1},\R^d)) \geq q$.  More generally:

\begin{lemma}
\label{lem:lowerbound}
Suppose that $M$ embeds in $\R^{p+1}$ and there exists a nonsingular skew-linear map $\R^{p+1} \times S^q \to \R^d$.  Then $\co(\Emb_d(M,\R^d)) \geq q$.  In particular, $\co(\Emb_d(M,\R^d)) \geq d-p-1$.
\end{lemma}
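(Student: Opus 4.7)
The plan is to directly produce a $\Z/2$-equivariant map $\Phi\colon S^q \to \Emb_d(M,\R^d)$ from the given data. Fix an embedding $\iota\colon M \hookrightarrow \R^{p+1}$ and a nonsingular skew-linear map $B\colon \R^{p+1} \times S^q \to \R^d$, and define
\[
\Phi(y)(m) \;=\; B(\iota(m),\,y).
\]
Three things need checking: (i) for each fixed $y \in S^q$, the map $\Phi(y) \colon M \to \R^d$ is an embedding; (ii) $\Phi$ is continuous into the compact-open topology; and (iii) $\Phi$ is equivariant with respect to the antipodal action on $S^q$ and the negation action on $\R^d$ (the $\ell = d$ action).

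For (i), nonsingularity of $B$ says that for any nonzero $y$ the linear map $B(\cdot,y)\colon \R^{p+1} \to \R^d$ has trivial kernel, hence is a linear embedding; composing with the embedding $\iota$ yields an embedding of $M$. For (ii), continuity follows from joint continuity of $B$ together with compactness of $M$ (more concretely, the map $S^q \times M \to \R^d,\ (y,m) \mapsto B(\iota(m),y)$ is continuous, and its adjoint into the compact-open topology is continuous since $M$ is a $k$-space). For (iii), skewness gives $\Phi(-y)(m) = B(\iota(m),-y) = -B(\iota(m),y) = \varepsilon\cdot \Phi(y)(m)$, where $\varepsilon$ negates all $d$ coordinates. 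This produces the required $\Z/2$-map witnessing $\co(\Emb_d(M,\R^d)) \ge q$.

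For the ``in particular'' clause, I would produce a nonsingular skew-linear map $\R^{p+1} \times S^{d-p-1} \to \R^d$ by invoking Lemma~\ref{lem:bilinear-exist}(e): polynomial multiplication gives a nonsingular bilinear map $\R^{p+1} \times \R^{d-p} \to \R^d$ (valid whenever $d \ge p+1$; when $d \le p$ the asserted lower bound $d-p-1$ is at most $-1$ and thus trivial). Restricting the second factor to the unit sphere $S^{d-p-1} \subset \R^{d-p}$ preserves nonsingularity and linearity in the first slot, and bilinearity specializes to skewness in the second slot. Applying the first part of the lemma with $q = d-p-1$ completes the proof.

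There is no genuine obstacle in this proof; the content is essentially the definition-unpacking observation that a nonsingular skew-linear map is exactly the currying of a $\Z/2$-equivariant family of linear embeddings. The only point worth being careful about is continuity in the compact-open topology and the degenerate regime $d < p+1$, where the coindex bound is vacuous.
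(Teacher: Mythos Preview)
Your argument is correct and matches the paper's approach exactly: the paper states (in the paragraph preceding the lemma) that currying a nonsingular skew-linear map yields the required $\Z/2$-map $S^q \to \Emb_d(\R^{p+1},\R^d)$, and then cites Lemma~\ref{lem:bilinear-exist}(e) for the ``in particular'' clause. One minor remark: you need not invoke compactness (or $k$-space hypotheses) on~$M$ for continuity of the adjoint---passing from a continuous $S^q \times M \to \R^d$ to a continuous $S^q \to C(M,\R^d)$ in the compact-open topology holds without any assumption on~$M$.
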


The latter statement follows from Lemma \ref{lem:bilinear-exist}(e).

We also obtain some basic upper bounds on the coindex.  Here
\[
F_2(M) = \{(x,y) \in M \times M \ : \ x \ne y\}
\]
refers to the two-point ordered configuration space of $M$ with the free $\Z/2$-action $(x,y) \mapsto (y,x)$.  Recall that the Stiefel-Whitney height $\cohomind(X)$ was defined in Section \ref{sec:nonsingularprelim}.

\begin{lemma}
\label{lem:obstruct}
Suppose $\cohomind(F_2(M)) \geq p$ and $\co(\Emb_d(M,\R^d)) \geq d-p$.  Then there exists a $(\Z/2)^2$-equivariant map $F_2(M) \times S^{d-p} \to \R^d$ which avoids zero.  In particular, $p$ and $d-p$ share a one in their binary expansions.
\end{lemma}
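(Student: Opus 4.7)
The plan is to build an explicit $(\Z/2)^2$-equivariant nonvanishing map directly from the coindex hypothesis, and then extract the binary-expansion condition from Proposition~\ref{prop:cohomind}.

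The hypothesis $\co(\Emb_d(M,\R^d)) \geq d-p$ supplies a $\Z/2$-equivariant map $\phi \colon S^{d-p} \to \Emb_d(M,\R^d)$, where $\Z/2$ acts on $\R^d$ (and hence on $\Emb_d(M,\R^d)$) by negation, since $\ell = d$. I define the evaluation-difference map
$$\Phi \colon F_2(M) \times S^{d-p} \to \R^d, \qquad \Phi((x,y),s) = \phi(s)(x) - \phi(s)(y).$$
Because $\phi(s)$ is an embedding and $x \neq y$, the map $\Phi$ never vanishes. Swapping $(x,y)$ negates $\Phi$, and equivariance of $\phi$ yields $\phi(-s) = -\phi(s)$, so replacing $s$ by $-s$ also negates $\Phi$. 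Thus both generators of $(\Z/2)^2$ act on the codomain $\R^d$ by negation, which matches the action implicit in Proposition~\ref{prop:cohomind}; this establishes the existence of the required $(\Z/2)^2$-equivariant map avoiding zero.

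For the arithmetic conclusion, I invoke Proposition~\ref{prop:cohomind} with $k=2$, $X_1 = F_2(M)$, $X_2 = S^{d-p}$, and indices $i_1 = p$, $i_2 = d-p$. One has $\cohomind(S^{d-p}) = d-p$ and $\cohomind(F_2(M)) \geq p$ by hypothesis, which is enough: the K\"unneth step in the proof of Proposition~\ref{prop:cohomind} only requires $\omega_1(X_j)^{i_j} \neq 0$, so it extends verbatim to heights bounded below rather than pinned down exactly. If $p$ and $d-p$ had disjoint binary expansions, Proposition~\ref{prop:cohomind} would force $\Phi$ to vanish somewhere, contradicting the construction; hence $p$ and $d-p$ must share a one in some digit of their binary expansions. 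The only real subtleties are verifying that the $(\Z/2)^2$-action on $\R^d$ produced by the evaluation-difference construction (both generators acting by $-1$) matches the action in Proposition~\ref{prop:cohomind}, and checking that its proof remains valid with Stiefel-Whitney heights at least $i_j$ rather than equal — both routine bookkeeping.
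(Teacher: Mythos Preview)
Your argument is correct and matches the paper's proof essentially verbatim: the paper uncurries your $\phi$ to a map $f\colon M\times S^{d-p}\to\R^d$ and forms the same difference map $(x,y,z)\mapsto f(x,z)-f(y,z)$, then invokes Proposition~\ref{prop:cohomind} exactly as you do. Your added remark that Proposition~\ref{prop:cohomind} only needs $\omega_1(X_j)^{i_j}\neq 0$ (so $\cohomind(F_2(M))\ge p$ suffices rather than equality) is a welcome clarification the paper leaves implicit.
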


\begin{proof}
Since $\co(\Emb_d(M,\R^d)) \geq d-p$, there exists a map $f \colon M \times S^{d-p} \to \R^d$ which is $\Z/2$-equivariant in the second factor.  Then $\Phi \colon F_2(M) \times S^{d-p} \to \R^d \colon (x,y,z) \mapsto f(x,z) - f(y,z)$ is $(\Z/2)^2$-equivariant and avoids zero since $f(\cdot,z)$ is an embedding.  By Proposition \ref{prop:cohomind} and the condition on $\cohomind(F_2(M))$, $p$ and $d-p$ share a one in their binary expansions.
\end{proof}

In particular, if $M$ is a topological manifold of dimension $p+1$, there exists a $\Z/2$-equivariant map from $S^p$ to $F_2(M)$, so $p \leq \co(F_2(M)) \leq \cohomind(F_2(M))$.  

\begin{corollary}
\label{cor:embsm}
Suppose the nonnegative integers $p$ and $d-p$ do not share a one in their binary expansions.  If $\cohomind(F_2(M)) \geq p$, then $\co(\Emb_d(M,\R^d)) = d-p-1$.  In particular, $\co(\Emb_d(S^p,\R^d)) = d-p-1$.
\end{corollary}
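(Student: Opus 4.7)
The plan is to sandwich $\co(\Emb_d(M,\R^d))$ between Lemmas \ref{lem:lowerbound} and \ref{lem:obstruct} under the stated binary-expansion hypothesis, and then verify the cohomological hypothesis in the special case $M = S^p$.

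For the upper bound $\co(\Emb_d(M,\R^d)) \le d-p-1$, I would argue by contradiction. Assume $\co(\Emb_d(M,\R^d)) \ge d-p$. Combined with the standing hypothesis $\cohomind(F_2(M)) \ge p$, Lemma \ref{lem:obstruct} forces $p$ and $d-p$ to share a one in some digit of their binary expansions, directly contradicting the hypothesis of the corollary. Hence $\co(\Emb_d(M,\R^d)) \le d-p-1$.

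For the matching lower bound I would appeal to Lemma \ref{lem:lowerbound}: once $M$ embeds in $\R^{p+1}$, the polynomial-multiplication bilinear map of Lemma \ref{lem:bilinear-exist}(e) supplies a nonsingular bilinear map $\R^{p+1} \times \R^{d-p} \to \R^d$ and thereby the required $\Z/2$-equivariant map $S^{d-p-1} \to \Emb_d(M,\R^d)$, yielding $\co(\Emb_d(M,\R^d)) \ge d-p-1$. In the advertised particular case $M = S^p$, embeddability into $\R^{p+1}$ is tautological (as the unit sphere), so this closes the equality. The cohomological hypothesis for $S^p$ is then immediate: the $\Z/2$-equivariant map $S^p \to F_2(S^p)$, $x \mapsto (x,-x)$, gives $p \le \co(F_2(S^p)) \le \cohomind(F_2(S^p))$.

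There is no serious obstacle, since the two preceding lemmas already do the real topological work. The conceptually delicate point worth highlighting is that the binary-expansion hypothesis is exactly the sharp input: it is what Proposition \ref{prop:cohomind}, feeding Lemma \ref{lem:obstruct}, leverages to cap the coindex strictly below $d-p$, while the matching lower bound at $d-p-1$ is supplied unconditionally by polynomial multiplication through Lemma \ref{lem:bilinear-exist}(e). In other words, the binary-expansion condition is precisely what promotes the generic lower bound of Lemma \ref{lem:lowerbound} to the claimed equality.
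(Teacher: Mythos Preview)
Your proof is correct and follows exactly the paper's approach: sandwich the coindex between the contrapositive of Lemma~\ref{lem:obstruct} (upper bound) and Lemma~\ref{lem:lowerbound} via Lemma~\ref{lem:bilinear-exist}(e) (lower bound), then verify both hypotheses for $M = S^p$. One caveat worth flagging: your lower-bound argument for general $M$ tacitly invokes ``once $M$ embeds in $\R^{p+1}$,'' a hypothesis not actually present in the corollary's statement (only $\cohomind(F_2(M)) \ge p$ is assumed); this is arguably an imprecision in the paper itself rather than in your reasoning, and your treatment of the $S^p$ case---the part actually used downstream---is complete.
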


Recall that Corollary~\ref{cor:parametrized-radon} yields a generalization of the latter statement.

To briefly summarize the above results: the lower bounds for $\co(\Emb_d(M,\R^d))$ are related to the minimal embedding dimension $e$ of $M$ and the existence of skew-linear maps $\R^{e} \times S^q \to \R^d$, whereas the upper bounds for $\co(\Emb_d(M,\R^d))$ are related to the actual dimension $p+1$ of $M$ and the existence of biskew maps $S^p \times S^{d-p} \to \R^d$.  When there is some gap between these dimensions, upper bounds might be improved by computing the Stiefel-Whitney height of the configuration space.

We conclude this section with one result in the smooth category.  By restricting to the space of immersions $\Imm_d(M,\R^d)$ of $C^1$-manifolds~$M$, which contains the space of smooth embeddings $M \to \R^d$, we can also state upper bounds in terms of the existence of skew-linear maps, instead of biskew maps.

\begin{lemma}
\label{lem:smoothemb}
Let $M$ be a $C^1$~manifold of dimension~${p+1}$.  If $\co(\Imm_d(M,\R^d)) \geq q$, then there exists a nonsingular skew-linear map $\R^{p+1} \times S^q \to \R^d$.
\end{lemma}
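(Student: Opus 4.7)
The plan is to extract the desired nonsingular skew-linear map directly from the differential of the immersions at a single fixed basepoint. Since $\co(\Imm_d(M,\R^d)) \ge q$, there is a continuous $\Z/2$-equivariant map $\Phi \colon S^q \to \Imm_d(M,\R^d)$, where the codomain carries the action $\varepsilon \cdot f = -f$ (the case $\ell = d$). Choose any $p_0 \in M$ and fix a linear isomorphism $T_{p_0}M \cong \R^{p+1}$. I then define
\[
B \colon \R^{p+1} \times S^q \to \R^d, \qquad B(x,y) \;=\; d\Phi(y)_{p_0}(x),
\]
with $x \in \R^{p+1} \cong T_{p_0}M$.

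The three defining properties of a nonsingular skew-linear map are then immediate. Linearity in $x$ holds because $d\Phi(y)_{p_0} \colon T_{p_0}M \to \R^d$ is a linear map. Skew-equivariance in $y$ follows from $\Phi(-y) = -\Phi(y)$: differentiating this identity at $p_0$ yields $d\Phi(-y)_{p_0} = -d\Phi(y)_{p_0}$, hence $B(x,-y) = -B(x,y)$. Nonsingularity uses the immersion hypothesis at the basepoint: if $y \in S^q$ and $B(x,y) = 0$, then $d\Phi(y)_{p_0}(x) = 0$, and since $\Phi(y)$ is an immersion its derivative at $p_0$ is injective, forcing $x = 0$.

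The one subtle point, which I expect is the main obstacle to record carefully, is the continuity of $B$. This reduces to continuity of the assignment $y \mapsto d\Phi(y)_{p_0}$, which in turn requires that $\Imm_d(M,\R^d)$ be topologized in a way that makes first-derivative evaluation at a fixed point continuous. For the $C^1$ setting in which this lemma is stated, the natural topology on the space of immersions is the $C^1$-compact-open (or Whitney $C^1$) topology, with respect to which this evaluation is manifestly continuous. With that choice, $B$ is continuous, linear in the first slot, skew in the second, and nonsingular, completing the construction.
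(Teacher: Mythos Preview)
Your proof is correct and is essentially identical to the paper's: both fix a basepoint, take the differential of the parametrized family of immersions there, and read off linearity in the tangent direction, skew-symmetry in the sphere parameter, and nonsingularity from the immersion condition. The only difference is that you explicitly address continuity of $y \mapsto d\Phi(y)_{p_0}$ via the $C^1$-topology on the space of immersions, a point the paper leaves implicit.
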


\begin{proof}
Since $\co(\Imm_d(M,\R^d)) \geq q$, there exists a family of immersions $f_z \colon M \times S^q \to \R^d$ such that $f_z(x) = -f_{-z}(x)$ for all $x \in M$ and $z \in S^q$.  Therefore $(d(f_z))_x = - (d(f_{-z}))_x$, considered as operators $T_xM \to \R^d$, where we have identified the tangent spaces of $\R^d$ with $\R^d$ itself.  Fix $x_0 \in M$.  Define $\Phi \colon T_{x_0}M \times S^q \to \R^d \colon (v,z) \mapsto (d(f_z))_{x_0}(v)$.  This is linear in $v$, $\Z/2$-equivariant in~$z$, and because each $f_z$ is an immersion, only maps to zero when $v=0$.  Thus $\Phi$ is a nonsingular skew-linear map $\R^{p+1} \times S^q \to \R^d$.
\end{proof}

\begin{corollary}
The minimum dimension $q$ such that there exists a nonsingular skew-linear map $\R^{p+1} \times S^q \to \R^d$ is equal to $\co(\Imm_d(\R^{p+1},\R^d))$.
\end{corollary}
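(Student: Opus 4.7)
The plan is to establish the equality of the corollary by proving two inequalities between $\co(\Imm_d(\R^{p+1},\R^d))$ and the dimensions $q$ for which nonsingular skew-linear maps $\R^{p+1} \times S^q \to \R^d$ exist. Both directions hinge on the natural currying correspondence between nonsingular skew-linear maps $B \colon \R^{p+1} \times S^q \to \R^d$ and $\Z/2$-equivariant families $\Phi \colon S^q \to \Imm_d(\R^{p+1},\R^d)$ of linear embeddings.

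For the direction $\co(\Imm_d(\R^{p+1},\R^d)) \ge q$, I would suppose $B \colon \R^{p+1} \times S^q \to \R^d$ is nonsingular and skew-linear, and define $\Phi(z) = B(\cdot, z)$. For each fixed $z \in S^q$ the map $B(\cdot, z) \colon \R^{p+1} \to \R^d$ is linear and, by nonsingularity, injective; hence a smooth embedding and in particular an immersion, so $\Phi$ genuinely lands in $\Imm_d(\R^{p+1}, \R^d)$. Continuity of $\Phi$ in the compact-open topology is routine from bilinearity of $B$ together with compactness of closed balls in $\R^{p+1}$. The skewness $B(v,-z)=-B(v,z)$ translates under currying to $\Phi(-z) = \varepsilon \cdot \Phi(z)$, where $\varepsilon$ is the sign-reversing $\Z/2$-action on $\R^d$ that is used in the definition of $\Imm_d$. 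Hence $\Phi$ is a $\Z/2$-equivariant map $S^q \to \Imm_d(\R^{p+1},\R^d)$, which yields the desired bound on the coindex.

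The converse inequality is already supplied by Lemma~\ref{lem:smoothemb}: applied to the $C^1$-manifold $M = \R^{p+1}$ of dimension $p+1$ with $q = \co(\Imm_d(\R^{p+1},\R^d))$, it produces (via derivatives at a basepoint) a nonsingular skew-linear map $\R^{p+1} \times S^q \to \R^d$ at this value of $q$. Combining the two bounds pins down the coindex as exactly the distinguished dimension $q$ that governs the existence of nonsingular skew-linear maps $\R^{p+1} \times S^q \to \R^d$ and proves the corollary.

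I do not anticipate any substantive obstacle. The first direction is a formal currying argument that leverages injectivity of nonsingular linear maps to promote a family of linear maps to a family of immersions; the second is a direct invocation of the preceding lemma specialized to the flat case $M = \R^{p+1}$. The only small point worth checking carefully is that the $\Z/2$-action on $\Imm_d(\R^{p+1}, \R^d)$ matches the one induced by skew-linearity of $B$, and this is immediate from the definition of $\Imm_\ell$ with $\ell = d$.
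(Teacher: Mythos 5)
Your proposal is correct and follows essentially the same route as the paper, which simply combines Lemma~\ref{lem:smoothemb} with the currying argument of Lemma~\ref{lem:lowerbound}; your only (sensible) addition is to spell out that the curried family $z \mapsto B(\cdot,z)$ consists of injective linear, hence immersive, maps so that the lower bound lands directly in $\Imm_d(\R^{p+1},\R^d)$ rather than only in $\Emb_d(\R^{p+1},\R^d)$, which is exactly the point the paper uses implicitly.
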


\begin{proof} Combine Lemmas~\ref{lem:lowerbound} and~\ref{lem:smoothemb}.
\end{proof}

There exists a nonsingular skew-linear map $\R^{p+1} \times S^q \to \R^d$ if and only if $d\xi_{q}$ has $p+1$ linearly independent sections; here $\xi_q \to \R P^q$ is the canonical line bundle.  Thus determining the coindex of the space $\Imm_d(\R^{p+1},\R^d)$ is closely related to the generalized vector field problem.

It is known that the existence problems for nonsingular bilinear maps and nonsingular skew-linear maps are not equivalent.  In particular, Gitler and Lam~\cite{GitlerLam} showed that there exists a nonsingular skew-linear map $S^{27} \times \R^{13} \to \R^{32}$ (by showing that there exist $13$ linearly independent sections of $32\xi_{27}$) whereas there is no nonsingular bilinear map $\R^{28} \times \R^{13} \to \R^{32}$.

\section{Applications of Theorem \ref{thm:coindex}}
\label{sec:appl}

Here we prove the corollaries stated in Section~\ref{sec:intro}.  The real projective plane~$\R P^2$ can be triangulated in a unique way by a six-vertex triangulation, the antipodal quotient of the icosahedron. We will denote this triangulation by~$\Sigma_{\R P^2}$. This is the smallest triangulation (in the sense of simplicial complexes) of~$\R P^2$. Similarly, $\C P^2$ has a unique minimal triangulation on nine vertices that we will denote by~$\Sigma_{\C P^2}$. The triangulation $\Sigma_{\C P^2}$ was found by K\"uhnel, and the first (computer-aided) proof of its uniqueness is due to K\"uhnel and Lassmann~\cite{Kuhnel1983}; see also~\cite{Kuhnel1983-2}. Arnoux and Marin~\cite{Arnoux1991} showed that any triangulation $\Sigma$ of a $d$-manifold different from the sphere $S^d$ on $3\tfrac{d}{2}+3$ vertices has the property that for any bipartition of the vertex set of $\Sigma$ precisely one part is a face of~$\Sigma$. Thus for such triangulations, in particular for the triangulations $\Sigma_{\R P^2}$ and $\Sigma_{\C P^2}$, no two nonfaces are disjoint, and thus the Kneser graphs $\KG(\Sigma_{\R P^2})$ and $\KG(\Sigma_{\C P^2})$ have no edges. In particular, $\chi(\KG(\Sigma_{\R P^2})) = 1 = \chi(\KG(\Sigma_{\C P^2}))$.

\begin{proof}[Proof of Corollary \ref{cor:coindex-rp2}]
The statement for $\R P^2$ will follow from:
\begin{compactitem}
\item For $d = 4k$, $\co(\Emb_d(\R P^2,\R^d)) \geq 4k-1$,
\item For $d = 4k + 3$, $\co(\AEmb_d(\Sigma_{\R P^2},\R^d)) \leq 4k-1$,
\end{compactitem}
since every embedding is an almost-embedding and the coindex is nondecreasing in $d$.

The first item is a consequence of Lemma \ref{lem:lowerbound} with $p = 3$, together with Lemma \ref{lem:bilinear-exist}(c).  For the second item, we apply Theorem \ref{thm:coindex} to $\Sigma_{\R P^2}$.  In particular, we have $n = 6$, $\chi(\KG(\Sigma_{\R P^2})) = 1$, $\ell = d = 4k + 3$, and $m = d-3 = 4k$.  Since the integers $m = 4k$ and $\ell - m = 3$ do not share any common ones in their binary expansions, the coindex is at most $4k-1$, as desired.

Similarly, the statement for $\C P^2$ follows from:
\begin{compactitem}
\item For $d = 8k$, $\co(\Emb_d(\C P^2,\R^d)) \geq 8k-1$, \qquad (Lemma \ref{lem:lowerbound} ($p=7$) with Lemma \ref{lem:bilinear-exist}(d))
\item For $d = 8k+7$, $\co(\Emb_d(\C P^2,\R^d)) \geq 8k$, \qquad (Lemma \ref{lem:lowerbound} ($p=6$) with Lemma \ref{lem:bilinear-exist}(e))
\end{compactitem}
and the upper bounds from applying Theorem \ref{thm:coindex} to $\Sigma_{\C P^2}$:
\begin{compactitem}
\item For $d = 8k+6$, $\co(\AEmb_d(\Sigma_{\C P^2},\R^d)) \leq 8k-1$, 
\item For $d = 8k+7$, $\co(\AEmb_d(\Sigma_{\C P^2},\R^d)) \leq 8k$.
\end{compactitem}
This completes the proof.
\end{proof}

Observe that the proof of Corollary \ref{cor:coindex-rp2} depends on two elements: the upper bounds arise due to the minimal triangulations of the projective spaces, and the lower bounds arise because the existence of nonsingular bilinear maps is well understood in low dimensions (where these spaces embed).  For higher-dimensional manifolds, it could be nontrivial to obtain strong bounds on both ends.
 
\begin{proof}[Proof of Corollary~\ref{cor:rp2}]
We again apply Theorem~\ref{thm:coindex} to $\Sigma_{\R P^2}$ (resp.\ $\Sigma_{\C P^2}$), this time with $\ell=1$ and $d = 4$ (resp.\ $d= 7$).
\end{proof}

\begin{remark}
	Similar to $\Sigma_{\R P^2}$ and~$\Sigma_{\C P^2}$, there are triangulations of $8$-manifolds different from the sphere on $15$ vertices; see Brehm and K\"uhnel~\cite{Brehm1992}. However, there are now at least six (combinatorially different but PL homeomorphic) such triangulations; see Lutz~\cite{Lutz}. Brehm and K\"uhnel conjectured that these complexes indeed triangulate~$\mathbb{H} P^2$, the quaternionic projective plane, but it is only known that these are triangulations of manifolds like the projective plane in the sense of Eells and Kuiper~\cite{Eells}. In particular, they are cohomology quaternionic planes. Triangulations of $8$-manifolds on $15$ vertices embed into~$\R^{13}$, since they can be realized as proper subcomplexes of~$\partial\Delta_{14}$, which stereographically projects to~$\R^{13}$. Since these triangulations have no two disjoint nonfaces~\cite{Arnoux1991}, Theorem~\ref{thm:coindex} shows that for any triangulation $\Sigma$ of an $8$-manifold with $15$ vertices and for any dimension $d \equiv 12, 13, 14$, or $15 \mod 16$, $\co(\Emb_d(\Sigma, \R^d))\le d -13$. Matching lower bounds follow from combining Lemma~\ref{lem:lowerbound} with Lemma~\ref{lem:bilinear-exist}(e) for $p=12$ and $q=d-13$.
\end{remark}

\begin{proof}[Proof of Corollary~\ref{cor:vKF}]
The first statement is the special case of Theorem~\ref{thm:coindex} with $d = 2k+1$, $n = 2k+3$, $\ell=1$, and $\chi(\KG(\Delta^{(k)}_{2k+2})) = 1$, since no two missing faces are disjoint.  The second statement is the special case of Theorem~\ref{thm:coindex} with $d = 2k+1$, $n = 3k+3$, $\ell=1$, and $\chi(\KG([3]^{*(k+1)})) = k+1$, since a proper coloring of the Kneser graph is induced by coloring the missing edges in the $i^{\text th}$ copy of $[3]$ with color $i$.
\end{proof}

\begin{proof}[Proof of Corollary~\ref{cor:parametrized-radon}]
The lower bound follows from Corollary~\ref{cor:embsm}.  To obtain the upper bound, first apply Theorem \ref{thm:coindex} to $\Delta_{p+1}$; that is, let $\ell = d$, $n = p+2$, and $\chi(\KG(\Delta_{p+1}))=0$, so that $m = d- p$.  By hypothesis, $m$ and $\ell - m = p$ share no ones in their binary expansions, so 
$\co(\AEmb_d(\Delta_{p+1},\R^d)) \leq d-p-1$.
It remains to observe that $\co(\AEmb_d(\partial \Delta_{p+1},\R^d)) \leq \co(\AEmb_d(\Delta_{p+1},\R^d))$, since any almost-embedding $f : \partial \Delta_{p+1} \to \R^d$ extends to an almost-embedding $\tilde{f} : \Delta_{p+1} \to \R^d$, because the behavior of $\tilde{f}$ on the largest face does not affect whether $\tilde{f}$ is an almost-embedding.
\end{proof}

We conclude with the following table, which gives the values of $\co(\AEmb_d(\partial\Delta_{p+1},\R^d))$, for $p \leq 8$.  The evident pattern in each row continues for all values of~$d$. This generalizes classical results for the existence and nonexistence of nonsingular bilinear maps, as tabulated by Berger and Friedland~\cite{BergerFriedland}.

\begin{figure}[h!]
\begin{tabularx}{\textwidth}{ |c|c| *{24}{>{\centering\arraybackslash}X|} }
$_p \backslash ^d$ & 1 & 2 & 3 & 4 & 5 & 6 & 7 & 8 & 9 & 10 & 11 & 12 & 13 & 14 & 15 & 16 & 17 & 18 & 19 & 20 & 21 & 22 & 23 & 24 & 25 \\
\hline
\hline
1 & & 1 & \rc{1} & 3 & \rc{3} & 5 & \rc{5} & 7 & \rc{7} & 9 & \rc{9} & 11 & \rc{11} & 13 & \rc{13} & 15 & \rc{15} & 17 & \rc{17} & 19 & \rc{19} & 21 & \rc{21} & 23 & \rc{23} \\
\hline
2 & & & \rc{0} & 3 & 3 & \rc{3} & \rc{4} & 7 & 7 & \rc{7} & \rc{8} & 11 & 11 & \rc{11} & \rc{12} & 15 & 15 & \rc{15} & \rc{16} & 19 & 19 & \rc{19} & \rc{20} & 23 & 23 \\
\hline
3 & & & & 3 & 3 & 3 & \rc{3} & 7 & 7 & 7 & \rc{7} & 11 & 11 & 11 & \rc{11} & 15 & 15 & 15 & \rc{15} & 19 & 19 & 19 & \rc{19} & 23 & 23 \\
\hline
4 & & & & & \rc{0} & \rc{1} & \rc{2} & 7 & 7 & 7 & 7 & \rc{7} & \rc{8} & \rc{9} & \rc{10} & 15 & 15 & 15 & 15 & \rc{15} & \rc{16} & \rc{17} & \rc{18} & 23 & 23 \\
\hline
5 & & & & & & 1 & \rc{1} & 7 & 7 & 7 & 7 & 7 & \rc{7} & 9 & \rc{9} & 15 & 15 & 15 & 15 & 15 & \rc{15} & 17 & \rc{17} & 23 & 23 \\
\hline
6 & & & & & & & \rc{0} & 7 & 7 & 7 & 7 & 7 & 7 & \rc{7} & \rc{8} & 15 & 15 & 15 & 15 & 15 & 15 & \rc{15} & \rc{16} & 23 & 23 \\
\hline
7 & & & & & & & & 7 & 7 & 7 & 7 & 7 & 7 & 7 & \rc{7} & 15 & 15 & 15 & 15 & 15 & 15 & 15 & \rc{15} & 23 & 23 \\
\hline
8 & & & & & & & & & \rc{0} & \rc{1} & \rc{2} & \rc{3} & \rc{4} & \rc{5} & \rc{6} & 15 & 15 & 15 & 15 & 15 & 15 & 15 & 15 & \rc{15} & \rc{16} \\
\end{tabularx}
\caption{The coindex of $\AEmb_d(\partial\Delta_{p+1},\R^d)$.  The red circled numbers are filled by Corollary~\ref{cor:parametrized-radon}.  The $(8,16)$ entry, along with the rest of the $p=8$ row, may be filled using Lemma~\ref{lem:bilinear-exist}(g) and the fact that the coindex is nondecreasing in the variable~$d$.  Similarly, Lemma~\ref{lem:bilinear-exist}(d) gives a lower bound for the entries in columns $d = 8k$, and together with the nondecreasing fact, the large triangular regions (bounded by column $d = 8k$ and  row $p = 7$) may be filled.  Smaller triangular regions are filled similarly using parts (c), (b), and~(a).  }
\label{fig:embsm}
\end{figure}

\bibliographystyle{plain}

\end{document}